\documentclass[12pt]{article}

\usepackage{latexsym,amssymb,upref,amsmath,amsthm, amsfonts,authblk}
\usepackage{amssymb,amsmath,amsthm, calc, graphicx}
\usepackage{epsfig}
\usepackage{breqn}
\usepackage{footnpag}
\usepackage{rotating}
\usepackage{amsfonts}
\usepackage{setspace}
\usepackage{fullpage}
\usepackage{enumitem}
\usepackage{bbold}
\usepackage{comment}
\usepackage{pgf,tikz}
\usepackage{mathrsfs}
\usetikzlibrary{arrows}
\usepackage{yhmath}
\usepackage{hyperref}
\usepackage{authblk}
\usepackage{comment}
\usepackage{enumitem}

\bibliographystyle{plain}


\newtheorem{thm}{Theorem}
\newtheorem{definition}[thm]{Definition}
\newtheorem{claim}[thm]{Claim}
\newtheorem{lemma}[thm]{Lemma}

\newtheorem{observation}[thm]{Observation}

\newtheorem*{claimunnumbered}{Claim}

\newcommand{\thistheoremname}{}
\newtheorem*{genericthm*}{\thistheoremname}
\newenvironment{namedthm*}[1]
{\renewcommand{\thistheoremname}{#1}%
	\begin{genericthm*}}
	{\end{genericthm*}}

\newcommand\ex{\ensuremath{\mathrm{ex}}}

\newcommand{\B}{\mathcal{B}}

\newcommand{\Q}{\mathcal{Q}}

\newcommand{\cH}{\mathcal{H}}

\newcommand{\D}{\mathcal{D}}
\renewcommand{\P}{\mathcal{P}}

\newcommand{\codeg}{\mathrm{codeg}}
\newcommand{\abs}[1]{\left\lvert{#1}\right\rvert}

\title{$3$-uniform hypergraphs without a cycle of length five}

\author
{
Beka Ergemlidze
\thanks{Alfr\'ed R\'enyi Institute of Mathematics, Budapest.
		E-mail: \texttt{beka.ergemlidze@gmail.com}} \qquad 
Ervin Gy\H{o}ri
\thanks{Alfr\'ed R\'enyi Institute of Mathematics, Budapest.
		E-mail: \texttt{gyori.ervin@renyi.mta.hu}} 	\qquad
Abhishek Methuku \thanks{Department of Mathematics, \'Ecole Polytechnique F\'ed\'erale de Lausanne, Switzerland. E-mail: \texttt{abhishekmethuku@gmail.com}}
}

\begin{document}

\maketitle

\begin{abstract}
In this paper we show that the maximum number of hyperedges in a $3$-uniform hypergraph on $n$ vertices without a (Berge) cycle of length five is less than $(0.254 + o(1))n^{3/2}$, improving an estimate of Bollob\'as and Gy\H{o}ri. 

We obtain this result by showing that not many $3$-paths can start from certain subgraphs of the shadow.
\end{abstract}

\section{Introduction}

A hypergraph $H = (V, E)$ is a family $E$ of distinct subsets of a finite set $V$. The members of $E$ are called \emph{hyperedges} and the elements of $V$ are called \emph{vertices}. A hypergraph is called $r$-uniform is each member of $E$ has size $r$. A hypergraph $H = (V, E)$ is called \emph{linear} if every two hyperedges have at most one vertex in common. 

\vspace{2mm}

A Berge cycle of length $k \ge 2$, denoted Berge-$C_k$, is an alternating sequence of distinct vertices and distinct edges of the form $v_1,h_{1},v_2,h_{2},\ldots,v_k,h_{k}$ where $v_i,v_{i+1} \in h_{i}$ for each $i \in \{1,2,\ldots,k-1\}$ and $v_k,v_1 \in h_{k}$. (Note that if a hypergraph does not contain a Berge-$C_2$, then it is linear.) This definition of a hypergraph cycle is the classical definition due to Berge. More generally, if $F=(V(F),E(F))$ is a graph and $\Q=(V(\Q),E(\Q))$ is a hypergraph, then we say $\Q$ is \textit{Berge-F} if there is a bijection $\phi:E(F) \rightarrow E(\Q)$ such that $e \subseteq \phi(e)$ for all $e \in E(F)$. In other words, given a graph $F$ we can obtain a Berge-$F$ by replacing each edge of $F$ with a hyperedge that contains it. 

 
\vspace{2mm}

Given a family of graphs $\mathcal F$, we say that a hypergraph $\cH$ is \textit{Berge-$\mathcal F$-free} if for every $F \in \mathcal F$, the hypergraph $\cH$ does not contain a Berge-$F$ as a subhypergraph. The maximum possible number of hyperedges in a Berge-$\mathcal F$-free hypergraph on $n$ vertices is the \emph{Tur\'an number} of Berge-$\mathcal F$, and is denoted by $\ex_3(n, \mathcal F)$.  When $\mathcal F = \{F\}$ then we simply write $\ex_3(n, F)$ instead of $\ex_3(n, \{F\})$. 

\vspace{2mm}

Determining $\ex_3(n, \{C_2, C_3\})$ is basically equivalent to the famous $(6,3)$-problem. This was settled by Ruzsa and Szemer\'edi in their classical paper \cite{Ruzsa_Szem}, showing that $n^{2-\frac{c}{\sqrt{\log n}}} < \ex_3(n, \{C_2, C_3\}) = o(n^2)$  for some constant $c > 0$. An important Tur\'an-type extremal result for Berge cycles is due to Lazebnik and Verstra\"ete \cite{Lazeb_Verstraete}, who studied the maximum number of hyperedges in an $r$-uniform hypergraph containing no Berge cycle of length less than five (i.e., girth five). They showed the following.

\begin{thm}[Lazebnik, Verstra\"ete \cite{Lazeb_Verstraete}]\label{lazver} We have $$\ex_3(n, \{C_2, C_3,C_4\})=\frac{1}{6} n^{3/2}+o(n^{3/2}).$$
\end{thm}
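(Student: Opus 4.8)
The plan is to prove matching upper and lower bounds. Throughout, let $\cH$ be a $3$-uniform hypergraph on $n$ vertices with no Berge-$C_2$, Berge-$C_3$, or Berge-$C_4$; in particular $\cH$ is linear.

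\textbf{Upper bound.} I would pass to the \emph{shadow graph} $G$ of $\cH$, defined by $V(G)=V(\cH)$ and $xy\in E(G)$ iff $\{x,y\}$ is contained in some hyperedge. Since $\cH$ is linear, every pair of vertices lies in at most one hyperedge, so the three pairs inside a hyperedge are distinct edges of $G$, and pairs coming from distinct hyperedges are distinct; hence $|E(G)|=3|E(\cH)|$. The key point is that $G$ is $C_4$-free. Indeed, suppose $v_1v_2v_3v_4$ is a $4$-cycle in $G$, and choose a hyperedge $h_i\supseteq\{v_i,v_{i+1}\}$ for each $i$ (indices mod $4$). Opposite edges cannot lie in one hyperedge, since $h_1=h_3$ (or $h_2=h_4$) would force four distinct vertices into a $3$-element set. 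If some two consecutive ones coincide, say $h_1=h_2$, then $h_1=\{v_1,v_2,v_3\}$, so $v_1v_3\in E(G)$; linearity then forces $h_1,h_3,h_4$ to be pairwise distinct, and the triangle $v_1v_3v_4$ yields a Berge-$C_3$, a contradiction. In the only remaining case $h_1,h_2,h_3,h_4$ are distinct and form a Berge-$C_4$, again a contradiction. So $G$ is $C_4$-free, and the K\H{o}v\'ari--S\'os--Tur\'an bound $\ex(n,C_4)\le \tfrac14\bigl(1+\sqrt{4n-3}\bigr)n=(\tfrac12+o(1))n^{3/2}$ gives $|E(\cH)|=\tfrac13|E(G)|\le(\tfrac16+o(1))n^{3/2}$.

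\textbf{Lower bound.} Here the task reduces to the following: for infinitely many $n$, construct a $C_4$-free graph $G_n$ on $n$ vertices with $(\tfrac12-o(1))n^{3/2}$ edges in which every edge lies in \emph{exactly one} triangle. Taking these triangles as hyperedges produces a hypergraph $\cH_n$ with $|E(\cH_n)|=\tfrac13|E(G_n)|=(\tfrac16-o(1))n^{3/2}$; linearity and the absence of a Berge-$C_3$ both follow from the triangle partition (a Berge-$C_3$ would require a triangle of $G_n$ not among the partition triangles), while the absence of a Berge-$C_4$ and Berge-$C_2$ follows from $C_4$-freeness. One then builds such a $G_n$ by an explicit algebraic construction — for instance from the point--line incidence structure of a projective plane $PG(2,q)$, realised via its orthogonal polarity graph, whose edge set can be arranged to decompose into self-polar triangles while remaining $C_4$-free — and a routine argument (deleting a bounded number of vertices, or taking vertex-disjoint copies and padding) transfers the estimate from these special values of $n$ to all $n$.

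The upper bound is essentially mechanical once the shadow graph is introduced, and I expect it to cause no trouble. The genuine obstacle is the lower bound: one must exhibit a $C_4$-free graph that is simultaneously as dense as the extremal graphs for $\ex(n,C_4)$ \emph{and} cleanly decomposable into triangles with no stray triangle sitting on any edge, and securing all three properties at once is where the real work lies.
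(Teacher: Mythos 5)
This statement is quoted in the paper from Lazebnik and Verstra\"ete and is not proved there, so there is no in-paper argument to compare against; your proposal has to stand on its own. Your upper bound does: passing to the shadow $G$, noting $|E(G)|=3|E(\cH)|$ by linearity, and the case analysis showing a $4$-cycle in $G$ forces a Berge-$C_2$, Berge-$C_3$ or Berge-$C_4$ (opposite hyperedges equal is impossible; consecutive equal gives the triangle $v_1v_3v_4$ with three distinct hyperedges; all distinct gives a Berge-$C_4$) is complete and correct, and K\H{o}v\'ari--S\'os--Tur\'an then yields $(\tfrac16+o(1))n^{3/2}$.

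The gap is in the lower bound, exactly where you flag it. The reduction is fine: if every edge of a $C_4$-free graph $G_n$ with $(\tfrac12-o(1))n^{3/2}$ edges lies in exactly one triangle, then the triangles form a girth-five triple system of the right size. But the existence of such a $G_n$ is precisely the content of the theorem's lower bound, and you only assert it. Moreover, as literally stated the assertion about the orthogonal polarity graph $ER_q$ is false: the $q+1$ absolute points (those with $x^{T}Ax=0$) have the property that every edge incident to them lies in \emph{no} triangle, so $E(ER_q)$ does not decompose into triangles. The statement becomes true only after deleting the absolute points, and then it needs a proof: for adjacent non-absolute $x,y$, any common neighbour $z$ satisfies $z^{\perp}\supseteq\{x,y\}$, hence $z$ is the pole of the line $xy$ and is unique; $z\in\{x,y\}$ would force $x$ or $y$ to be absolute, and $z$ absolute would force $x,y,z$ collinear on $z^{\perp}$ and then $x^{\perp}=xy\ni x$, again making $x$ absolute --- so every edge of the induced subgraph on non-absolute points lies in exactly one triangle, and only $O(\sqrt n)$ vertices and $O(n)$ edges were discarded. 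With that lemma supplied (plus the routine prime-power density argument to pass to all $n$), your route does work; without it, the proposal proves only the upper bound and leaves the matching construction, which is the genuinely hard half, unestablished.
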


The systematic study of the Tur\'an number of Berge cycles started with the study of Berge triangles by Gy\H ori \cite{Gyori_triangle}, and continued with the study of Berge five cycles by Bollob\'as and Gy\H ori \cite{BGY2008} who showed the following.

\begin{thm}[Bollob\'as, Gy\H ori \cite{BGY2008}]
\label{BollobasGyorithm}
We have,
$$(1 + o(1)) \frac{n^{3/2}}{3\sqrt{3}} \le ex_3(n, C_5) \le \sqrt{2}n^{3/2} + 4.5n.$$
\end{thm}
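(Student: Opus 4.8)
The statement has two halves; here is how I would approach each.

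\emph{Lower bound.} The plan is to exhibit a construction with $(1-o(1))\frac{n^{3/2}}{3\sqrt3}=(1-o(1))(n/3)^{3/2}$ hyperedges and no Berge-$C_5$. Start from a bipartite $C_4$-free graph $\Gamma$ whose two parts $X,Y$ have almost equal size and which has $e(\Gamma)=(1-o(1))|X|^{3/2}$ edges — for concreteness a point-line incidence graph of a projective plane, truncated so that the two sides have roughly equal size. Build a $3$-uniform hypergraph $H$ from $\Gamma$ by forming triples out of the edges of $\Gamma$ (for instance, at each vertex of one side pair up its incident edges, introducing a small set of auxiliary vertices), arranged so that $n=(1+o(1))\cdot 3|X|$ and $|E(H)|=(1-o(1))|X|^{3/2}=(1-o(1))(n/3)^{3/2}$. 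The real content is to check that $H$ contains no Berge-$C_5$: one runs through the possible ways five \emph{distinct} hyperedges could be arranged into a $5$-cycle and rules each out using that no two vertices of $\Gamma$ have two common neighbours.

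\emph{Upper bound — reductions.} Now let $H$ be $3$-uniform and Berge-$C_5$-free on $n$ vertices; the target is $|E(H)|\le\sqrt2\,n^{3/2}+4.5n$. First reduce to bounded degree: as long as some vertex lies in at most an absolute constant number $c$ of hyperedges, delete it; each deletion removes at most $c$ hyperedges and decreases the vertex count by one, so the total loss over the whole process is at most $cn$, and with the appropriate constant this is at most $4.5n$ (this is where the additive term comes from). So it suffices to prove $|E(H)|\le\sqrt2\,n^{3/2}$ under the assumption that every vertex lies in more than $c$ hyperedges. It is also worth recording the elementary fact that for every vertex $v$ the link $L_v$ — the graph on $V(H)\setminus\{v\}$ with an edge $xy$ for each hyperedge $\{v,x,y\}$ — is $C_5$-free, since a $5$-cycle $v_1\cdots v_5$ in $L_v$ yields the five distinct hyperedges $\{v,v_i,v_{i+1}\}$, a Berge-$C_5$ on $v_1,\dots,v_5$.

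\emph{Upper bound — main estimate.} The idea is to encode $H$ into a graph $G$ on $V(H)$: for each hyperedge $\{x,y,z\}$ choose, using the local structure of the links, one or two of the pairs $xy,xz,yz$ and put them into $G$, in such a way that $|E(H)|$ is controlled by $|E(G)|$. One then shows that, since $H$ is Berge-$C_5$-free, $G$ cannot contain a certain small configuration — essentially a $4$-cycle, or two short cycles/paths overlapping in a prescribed pattern. A K\H{o}v\'ari--S\'os--Tur\'an/Reiman-type bound (with $\ex(n,C_4)\le\frac14(1+\sqrt{4n-3})\,n$ as the model) then gives $|E(G)|\le(1+o(1))\frac{1}{2} n^{3/2}$, and combined with the relation between $|E(G)|$ and $|E(H)|$ this yields $|E(H)|\le(1+o(1))\sqrt2\,n^{3/2}$, hence the claim.

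\emph{Where the difficulty lies.} The main obstacle is the implication ``forbidden configuration in $G$ (equivalently, a short cycle in the shadow of $H$) $\Rightarrow$ Berge-$C_5$ in $H$''. A $5$-cycle in the shadow need not lift to a Berge-$C_5$, since the five shadow edges must be represented by five \emph{pairwise distinct} hyperedges and two consecutive shadow edges might only be jointly available inside a single common triple. The argument therefore has to carry enough information about how the hyperedges of $H$ overlap to select the five representatives to be distinct, and it is there that a fairly long case analysis — and the precise value $\sqrt2$ of the leading constant — enters.
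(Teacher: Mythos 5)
There are genuine gaps in both halves. For the lower bound, the construction you sketch is not the one that achieves $(1+o(1))\,n^{3/2}/(3\sqrt{3})=(1+o(1))(n/3)^{3/2}$: ``pairing up the incident edges'' at each vertex of one side produces at most one triple for every \emph{two} edges of $\Gamma$, so even before any verification you only get about $\tfrac12 (n/3)^{3/2}$ hyperedges (and introducing an auxiliary vertex per pair inflates the vertex count further). The construction the paper uses (the Bollob\'as--Gy\H{o}ri example) is different: each vertex $u$ of one side is replaced by \emph{two} new vertices $u_1,u_2$, and for every edge $uv$ of the $C_4$-free bipartite graph $G_0$ one takes the triple $u_1u_2v$; this gives exactly one hyperedge per edge of $G_0$, hence $(1+o(1))(n/3)^{3/2}$ hyperedges on $n$ vertices. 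Moreover, the Berge-$C_5$-freeness is precisely the content you defer (``one runs through the possible ways\dots''), and it genuinely depends on which construction is used: in an edge-pairing variant, five vertices $c_1,\dots,c_5$ of the unpaired side such that each consecutive pair happens to be paired at some common neighbour already give five distinct triples forming a Berge-$C_5$, so freeness cannot be taken for granted, whereas in the split-vertex construction the check goes through because every hyperedge contains a doubled pair $u_1u_2$, a $5$-cycle cannot alternate between the two sides, and $G_0$ contains no $C_4$.

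Your upper-bound argument is a plan rather than a proof. You never specify which configuration is forbidden in the auxiliary graph $G$, how a hyperedge is encoded (one pair or two, chosen how), or the inequality relating $\abs{H}$ to $\abs{G}$; and the arithmetic as stated does not close: from $\abs{G}\le (1+o(1))\tfrac12 n^{3/2}$ one would need $\abs{H}\le 2\sqrt{2}\,\abs{G}$ to reach $\sqrt{2}\,n^{3/2}$, and no such relation is stated or justified (a priori one needs a codegree argument to control $\abs{H}$ by $\abs{G}$ at all). The minimum-degree reduction yielding the $4.5n$ term is likewise only asserted. Note also that the paper itself does not reprove this upper bound: Theorem \ref{BollobasGyorithm} is quoted from Bollob\'as and Gy\H{o}ri \cite{BGY2008}, and the paper presents only the lower-bound construction, so the substance you would need to supply --- the exact construction and its Berge-$C_5$-freeness, and the entire case analysis behind the $\sqrt{2}n^{3/2}+4.5n$ bound --- is missing from the proposal.
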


The following construction of Bollob\'as and Gy\H ori proves the lower bound in Theorem \ref{BollobasGyorithm}.

\vspace{2mm}

\textit{Bollob\'as-Gy\H{o}ri Example.} Take a $C_4$-free bipartite graph $G_0$ with $n/3$ vertices in each part and $(1 + o(1))(n/3)^{3/2}$ edges. In one part, replace each vertex
$u$ of $G_0$ by a pair of two new vertices $u_1$ and $u_2$, and add the triple $u_1u_2v$ for each edge $uv$ of $G_0$. It is easy to check that the resulting hypergraph $H$ does not contain a Berge cycle of length $5$. Moreover, the number of hyperedges in $H$ is the same as the number of edges in $G_0$.

\vspace{2mm}

In this paper, we improve Theorem \ref{BollobasGyorithm} as follows.

\begin{thm}
We have,
	\label{mainbergec5}
	$$\ex_3(n, C_5) <  (1 + o(1)) 0.254  n^{3/2}.$$
\end{thm}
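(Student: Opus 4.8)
The plan is to bound $|E(H)|$ for a Berge-$C_5$-free $3$-uniform hypergraph $H$ on $n$ vertices by working in the shadow graph $G=\partial_2H$ and showing that few paths of length three can emanate from the natural ``$2$-path'' (cherry) subgraphs that Berge-$C_5$-freeness controls.

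\textbf{Reformulation.} Each hyperedge $h=\{x,y,z\}$ is a triangle $T_h$ in $G$; writing $\mathrm{mult}(e)$ for the number of hyperedges containing an edge $e\in E(G)$ we have $3|E(H)|=\sum_e\mathrm{mult}(e)$, so it suffices to bound $|E(G)|$ together with the total excess $\sum_e(\mathrm{mult}(e)-1)$ coming from ``books'' (sets of hyperedges through a fixed pair). The basic structural observation is: if $a\!-\!x\!-\!b$ is a path of length two in $G$ whose two edges lie in two \emph{distinct} hyperedges $h_1\ni\{a,x\}$, $h_2\ni\{x,b\}$ (a \emph{Berge-cherry}), then there is no path $a\!-\!c\!-\!d\!-\!b$ of length three in $G$ internally disjoint from $\{x\}\cup h_1\cup h_2$ whose three edges lie in three hyperedges distinct from one another and from $h_1,h_2$ — for such a path would close with the cherry into a Berge-$C_5$. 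Equivalently, every $5$-cycle of $G$ has two consecutive edges that cannot be separated into distinct hyperedges. This already forces books to be ``spread'': two large books, through $\{u,v\}$ and through $\{u',v'\}$, sharing a vertex (or interacting with a short path) would create a Berge-$C_5$, so the book structure obeys a $C_4$-free-type bound and contributes only $O(n^{3/2})$ to $|E(H)|$ with an explicit constant.

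\textbf{The main count.} For the remaining, essentially linear, part one shows that almost every path $a\!-\!c\!-\!d\!-\!b$ of length three in $G$ is ``fresh'' in the above sense — the non-fresh ones can be charged to a hyperedge with bounded multiplicity, so there are only $O(|E(H)|)$ of them — and hence, by the structural observation, the endpoint pair $\{a,b\}$ of such a $3$-path cannot be the endpoint pair of a Berge-cherry except through a small forbidden set around it. Double-counting incidences between Berge-cherries and $3$-paths with matching endpoints (a genuinely compatible pair being a Berge-$C_5$, hence essentially nonexistent) against, on the one side, the count of $3$-paths of $G$, which is $\Omega(|E(G)|^3/n^2)$ by convexity of the degree sequence, and on the other side the count of Berge-cherries, which is $\Omega(|E(H)|^2/n)$, and then separating low-degree vertices (contributing at most $\tfrac12 t n$ edges for a threshold $t$ of order $\sqrt n$) from heavy ones and optimizing over $t$, yields $|E(H)|<(0.254+o(1))n^{3/2}$.

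\textbf{Main obstacle.} Since the crude version of this scheme already recovers the bound $\sqrt2\,n^{3/2}$ of Theorem~\ref{BollobasGyorithm}, the whole difficulty lies in the constants. One must (i) describe \emph{exactly} which local configurations around a hyperedge keep a $5$-cycle from being a Berge-$C_5$, so that the number of non-fresh $3$-paths and the size of the forbidden set near a Berge-cherry are estimated sharply rather than by an absolute constant; (ii) balance precisely the ``book'' contribution against the ``linear'' contribution, since the extremal construction is a $C_4$-free bipartite graph carrying books of size $\approx\sqrt n$ on one side, so neither term may be handled wastefully; and (iii) choose the ``certain subgraphs of the shadow'' — which cherries to use and which single representative edge of each triangle $T_h$ to track — so that the passage from $3$-path counts back to $|E(H)|$ is as lossless as possible. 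Optimizing the resulting one-parameter expression is what produces the explicit value $0.254$.
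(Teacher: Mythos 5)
Your proposal is a programme rather than a proof, and the gap is exactly where you locate it yourself: everything that produces the constant $0.254$ is missing. The general philosophy (work in the shadow $G=\partial H$, use Berge-$C_5$-freeness to forbid $3$-paths that close up with a ``cherry'', compare against a global lower bound on the number of $3$-paths) does match the paper, but the paper's mechanism for getting a sharp constant is a concrete structural decomposition that your sketch never supplies. The paper classifies the tightly-connected blocks of $H$ by their cores (Lemma \ref{blockstructure}: a core is empty, a crown, $K_4^3$, $F_1$ or $F_2$), and from this derives an edge decomposition $\D$ of $G$ into $2$-paths, triangles and $K_4$'s, with parameters $\alpha_1,\alpha_2$ recording the proportions; Claim \ref{alpha12} then converts $\abs{H}$ into $\frac{4-2\alpha_1-\alpha_2}{6}\cdot\frac{n\overline d_G}{2}$. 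The heart of the argument is Lemma \ref{weirdlemma} (at most $2\abs{M'}+O(\sqrt n)$ good $2$-paths leave a vertex into a prescribed neighbourhood set), proved via a delicate analysis of the auxiliary bipartite graph $B$, together with the per-configuration bounds of Claims \ref{trianglecomponent}, \ref{2pathcomponent} and \ref{k4pathcomponent} ($8n$, $4n$, $6n$ good $3$-paths starting from a triangle, $2$-path, $K_4$ of $\D$, respectively, up to $O(\sqrt n)$). Balancing these against the Blakley--Roy lower bound (Claim \ref{countingpaths}) gives $\overline d_G^2\lesssim\frac{5\alpha_1+3\alpha_2+3}{6}n$, and maximizing $\frac{4-2\alpha_1-\alpha_2}{12}\sqrt{\frac{5\alpha_1+3\alpha_2+3}{6}}$ over $\alpha_1,\alpha_2$ yields $0.2536$. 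None of these quantitative ingredients, nor any substitute for them, appears in your write-up.

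Moreover, the specific double count you propose would not deliver the theorem as stated. Treating the ``books'' (crowns) as an $O(n^{3/2})$ error term controlled by a $C_4$-free-type bound is structurally wrong for this problem: in the Bollob\'as--Gy\H{o}ri construction the books \emph{are} the main term, and in the paper's accounting they enter through $\alpha_2$ (the $2$-path part of $\D$), with the extremal values $\alpha_1=0$, $\alpha_2=2/3$ coming precisely from crown blocks; any argument that separates them additively with an unoptimized constant cannot beat $\sqrt2\,n^{3/2}$, as you concede. Similarly, the claimed lower bound of $\Omega(\abs{H}^2/n)$ Berge-cherries and the ``matching endpoints'' incidence count are not carried out, and the forbidden-set/freshness estimates you would need ``sharply rather than by an absolute constant'' are exactly the content of Lemmas \ref{neighborhood edges} and \ref{weirdlemma} and Claim \ref{maxdegree}, which require the Berge-$C_5$-free structure (no path of length $5$ in a link, no path of length $12$ in $G_v$, the disjoint sets $S_y$ in Claim \ref{bipartitegraph}) and are not routine. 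As it stands, your argument establishes nothing beyond the already known $O(n^{3/2})$ bound.
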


\vspace{2mm}
Roughly speaking, our main idea in proving the above theorem is to analyze the structure of a Berge-$C_5$-free hypergraph, and use this structure to efficiently bound the number of paths of length $3$ that start from certain dense subgraphs (e.g., triangle, $K_4$) of the $2$-shadow. This bound is then combined with the lower bound on the number of paths of length $3$ provided by the Blakley-Roy inequality  \cite{Blakley_Roy}. We prove Theorem \ref{mainbergec5} in Section \ref{maintheoremsection}.

\vspace{2mm}

Ergemlidze, Gy\H{o}ri and Methuku \cite{EGM}   considered the analogous question for linear hypergraphs and proved that $ex_3(n, \{C_2, C_5\}) = n^{3/2}/{3\sqrt{3}}+o(n^{3/2})$. Surprisingly, even though their lower bound is the same as the lower bound in Theorem \ref{BollobasGyorithm}, the linear hypergraph that they constructed in \cite{EGM} is very different from the hypergraph used in the Bollob\'as-Gy\H{o}ri example discussed above -- the latter is far from being linear. In \cite{EGM}, the authors also strengthened Theorem \ref{lazver} by showing that $\ex_3(n, \{C_2,C_3,C_4\}) \sim \ex_3(n, \{C_2,C_4\}).$ Recently, $\ex_3(n, C_4)$ was studied in \cite{C_4}.
See \cite{EM} for results on the maximum number of hyperedges in an $r$-uniform hypergraph of girth six. 

\vspace{2mm}

Gy\H{o}ri and Lemons \cite{Gyori_Lemons, Gyori_Lemons2} generalized Theorem \ref{BollobasGyorithm} to Berge cycles of any given length and proved bounds on $ex_r(n, C_{2k+1})$ and $ex_r(n, C_{2k})$. These bounds were improved by F\"uredi and \"Ozkahya \cite{Furedi_Ozkahya}, Jiang and Ma \cite{JiangMa}, Gerbner, Methuku and Vizer \cite{GMM}. Recently F\"uredi, Kostochka and Luo \cite{furedi2018avoiding} started the study of the maximum size of an $n$-vertex $r$-uniform hypergraph without any Berge cycle of length at least $k$. This study has been continued in \cite{furedi2018avoiding2, Gyori_Lemonsetal, KostochkaLuo, EGMSTZ}.



\vspace{2mm}

General results for Berge-$F$-free hypergraphs have been obtained in \cite{gp1, GMT, GMP} and the Tur\'an numbers of Berge-$K_{2,t}$ and Berge cliques, among others, were studied in \cite{T2016, PTTW, GMM, Gyarfas, GMP}. 



\subsection*{Notation}
	\label {notation}
We introduce some important notations and definitions used throughout the paper.  
\begin{itemize}
\itemsep0em 
    \item Length of a path is the number of edges in the path. We usually denote a path $v_0, v_1, \ldots, v_k$, simply as $v_0 v_1 \ldots v_k$.

    \item For convenience, an edge $\{a,b\}$ of a graph or a pair of vertices $a,b$ is referred to as $ab$. A hyperedge $\{a,b,c\}$ is written simply as $abc$. 
    
    \item For a hypergraph $H$ (or a graph $G$), for convenience, we sometimes use $H$ (or $G$) to denote the edge set of the hypergraph $H$ (or $G$ respectively). Thus the number of edges in $H$ is $\abs{H}$.
    
    \item Given a graph $G$ and a subset of its vertices $S$, let the subgraph of $G$ induced by $S$ be denoted by $G[S]$. 
    
    \item  For a hypergraph $H$, let $\partial H = \{ab \mid ab \subset e \in E(H)\}$ denote its \emph{2-shadow} graph. 
    
    \item For a hypergraph $H$, the \textit{neighborhood} of $v$ in $H$ is defined as 
$$N(v) = \{x \in V(H) \setminus \{v \}  \mid v,x \in h \text{ for some } h \in E(H)\}.$$

    \item For a hypergraph $H$ and a pair of vertices $u, v \in V(H)$, let $\codeg(v,u)$ denote the number of hyperedges of $H$ containing the pair $\{u,v\}$.
\end{itemize}


\newpage

\section{Proof of Theorem \ref{mainbergec5}}
\label{maintheoremsection}

Let $H$ be a hypergraph on $n$ vertices without a Berge $5$-cycle and let $G=\partial H$ be the $2$-shadow of $H$. First we introduce some definitions.

\begin{definition}
A pair $xy \in \partial H$ is called \emph{thin} if $\codeg(xy) = 1$, otherwise it is called \emph{fat}. 

We say a hyperedge $abc \in H$ is \emph{thin} if at least two of the pairs $ab,bc,ac$ are thin.
\end{definition}

\begin{definition}
We say a set of hyperedges (or a hypergraph) is tightly-connected if it can be obtained by starting with a hyperedge and adding hyperedges one by one, such that every added hyperedge intersects with one of the previous hyperedges in $2$ vertices.
\end{definition}

\begin{definition}
A \emph{block} in  $H$ is a maximal set of tightly-connected hyperedges.
\end{definition}

\begin{definition}
For a block $B$,
a maximal subhypergraph of $B$ without containing thin hyperedges is called the \emph{core} of the block.
\end{definition}

Let $K_4^3$ denote the complete $3$-uniform hypergraph on $4$ vertices. A crown of size $k$ is a set of $k\geq 1$ hyperedges of the form $abc_1,abc_2,\ldots ,abc_k$. Below we define $2$ specific hypergraphs:
\begin{itemize}
\item Let $F_1$ be a hypergraph consisting of exactly $3$ hyperedges on $4$ vertices (i.e., $K_4^{3}$ minus an edge).
\item For distinct vertices $a,b,c,d$ and $o$, let $F_2$ be the hypergraph consisting of hyperedges $oab, obc, ocd$ and $oda$.
\end{itemize}




\begin{lemma}
\label{blockstructure}
Let $B$ be a block of $H$, and let $\B$ be a core of $B$. Then $\B$ is either $\emptyset, K_4^3, F_1, F_2$ or a crown of size $k$ for some $k\geq 1$.
\end{lemma}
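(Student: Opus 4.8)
The plan is to analyze a block $B$ through its core $\B$, using the Berge-$C_5$-free condition to severely restrict how hyperedges can sit together. First I would fix a core $\B$ (a maximal thin-hyperedge-free subhypergraph of $B$), assume $\B \neq \emptyset$, and pick a hyperedge $e = abc \in \B$. Since no pair in $e$ is thin in a way that matters (at most one of $ab, bc, ca$ is thin), I want to understand the ``crowns'' through each fat pair: if $ab$ is fat, there is some $abc' \in H$ with $c' \neq c$. The key preliminary observation is that tight-connectedness plus Berge-$C_5$-freeness forces everything in $\B$ to live on a very small vertex set. Concretely, I would first show that if $\B$ contains a crown of size $\geq 2$, say $abc_1, abc_2$, then any further hyperedge of $\B$ must also be of the form $abc_i$ — because a hyperedge meeting the crown in two vertices in any other way, combined with two crown edges, lets one build a Berge-$C_5$ (five edges, five vertices chosen from $a,b,c_1,c_2$ and the new vertex). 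This pins down the ``crown of size $k$'' case.

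Next, assuming $\B$ is not a crown, every fat pair of a hyperedge of $\B$ extends in a ``non-parallel'' way, and I would argue that $\B$ is supported on exactly $4$ vertices. Start from $e = abc$; some pair, say $ab$, is fat (if all three pairs were thin, $e$ would be thin, contradicting $e \in \B$ — wait, I only need at least two thin to be excluded, so at least two of the three pairs are fat). Take a fat pair $ab$ and a hyperedge $abd$; then consider another fat pair, say $ac$, and its extension $acd'$. If $d' \notin \{b, d\}$, I would derive a Berge-$C_5$ using $abc, abd, acd'$ and two more edges, or directly find a $5$-cycle on $\{b, c, d, d', a\}$; this forces the extensions of the fat pairs of $e$ to reuse the single new vertex $d$, so $V(\B) \subseteq \{a,b,c,d\}$. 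The same local argument, applied to every hyperedge of $\B$ (each is tightly connected to $e$ via a chain, and I can propagate the $4$-vertex bound along the chain), shows $\B \subseteq \binom{\{a,b,c,d\}}{3}$. Since $K_4^3$ has exactly four $3$-subsets, $\B$ is a subhypergraph of $K_4^3$ on these $4$ vertices, i.e. it has $1$, $2$, $3$, or $4$ hyperedges.

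Finally I would rule out the unwanted configurations among subhypergraphs of $K_4^3$. With $4$ hyperedges we get $K_4^3$ itself. With $3$ hyperedges on $4$ vertices, up to isomorphism there is only one such hypergraph, namely $F_1$ ($K_4^3$ minus an edge). With $2$ hyperedges sharing two vertices we have a crown of size $2$; two hyperedges sharing only one vertex cannot occur inside a single block-core in the non-crown regime (and in any case a block is tightly connected, so two hyperedges in it share two vertices, giving a size-$2$ crown). A single hyperedge is a crown of size $1$. The role of $F_2$ is the remaining case: when the core is supported on $5$ vertices $o,a,b,c,d$ in the ``cyclic'' pattern $oab, obc, ocd, oda$ — this arises precisely when no single pair is fat enough to force collapse to $4$ vertices but the tight-connections chain around $o$; I would check that $F_2$ is Berge-$C_5$-free (it is: any Berge-$C_5$ needs five edges, but $F_2$ has only four), that adding any fifth hyperedge tightly-connected to it either creates a Berge-$C_5$ or a thin hyperedge (pushing it out of the core), and that $F_2$ contains no thin hyperedge relative to $H$ — so it is a legitimate maximal thin-free configuration, and conversely any core not falling into the earlier cases must be $F_2$.

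The main obstacle I anticipate is the case analysis showing that a core on more than $4$ vertices must be exactly $F_2$: one has to carefully track which pairs are fat versus thin (the definition of core only excludes hyperedges with two thin pairs, so isolated thin pairs are allowed), and rule out all other $5$-or-more vertex tight configurations by exhibiting a Berge-$C_5$ in each. Enumerating these configurations cleanly — rather than by brute force — and finding the five-edge alternating cycle in each is the delicate part; I would organize it by the ``intersection pattern'' of consecutive hyperedges in the tight-connection sequence, showing that any pattern other than the $F_2$-pattern either closes up a Berge-$C_5$ within five edges or produces a hyperedge with two thin pairs.
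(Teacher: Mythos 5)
There is a genuine gap, and in fact the concrete claims you do commit to are false as stated. Your first step --- ``if $\B$ contains a crown of size $\geq 2$, say $abc_1,abc_2$, then every further hyperedge of $\B$ is of the form $abc_i$'' --- is contradicted by $K_4^3$, $F_1$ and $F_2$ themselves: each contains a crown of size $2$ together with hyperedges not through that pair, and each can occur as a core of a Berge-$C_5$-free hypergraph (with at most $4$ hyperedges they cannot contain a Berge $C_5$, which needs five distinct hyperedges). The same problem undermines your $4$-vertex claim: from $abc, abd, acd'$ with $d'\notin\{b,d\}$ you cannot ``directly find a $5$-cycle on $\{b,c,d,d',a\}$,'' since you have named only three hyperedges and the ``two more edges'' are never produced; indeed no contradiction is available, as $H=\{abc,abd,acd'\}$ is Berge-$C_5$-free --- there $abd$ and $acd'$ are simply thin and lie outside the core. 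More generally you conflate the hyperedges witnessing fatness with hyperedges of $\B$: the witnesses need not belong to the core, so they cannot bound $V(\B)$. Accordingly your intermediate conclusion that a non-crown core is supported on $4$ vertices is both unproved and false ($F_2$ has $5$ vertices), which you implicitly concede by reinstating $F_2$ at the end; but the step you then defer as an ``anticipated obstacle'' (a core on $\geq 5$ vertices must be $F_2$) is precisely the substance of the lemma, so the proof is not complete.

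For comparison, the paper's argument closes exactly this case with one short construction. After noting that $\B$ is tightly-connected, it handles $\leq 2$ hyperedges (crown) and $\leq 4$ vertices ($K_4^3$ or $F_1$) directly, and otherwise extracts a tight path $abc,bcd,cde$ of three core hyperedges on $5$ vertices (possible since $\B$ is tightly-connected, has $\geq 3$ hyperedges, $\geq 5$ vertices and is not a crown). Non-thinness of the two end hyperedges yields $h\neq abc$ containing $ab$ or $ac$, and $f\neq cde$ containing $ed$ or $ec$; if $h\neq f$, then $h,abc,bcd,cde,f$ are five distinct hyperedges forming a Berge $C_5$, while if $h=f$ the only possibility is $h=ace$, giving $F_2\subseteq\B$ and then $\B=F_2$ since $F_2$ cannot be tightly extended without creating a Berge $C_5$. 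The ingredient missing from your outline is exactly this: a length-$3$ tight path \emph{inside the core}, whose two end fat pairs supply the fourth and fifth distinct hyperedges needed either to close a Berge $C_5$ or to force $F_2$.
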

\begin{proof}
If $\B= \emptyset$, we are done, so let us assume $\B \not = \emptyset$. Since $B$ is tightly-connected and it can be obtained by adding thin hyperedges to $\B$, it is easy to see that  $\B$ is also tightly-connected. Thus if $\B$ has at most two hyperedges, then it is a crown of size $1$ or $2$ and we are done. Therefore, in the rest of the proof we will assume that $\B$ contains at least $3$ hyperedges.

If $\B$ contains at most $4$ vertices then it is easy to see that $\B$ is either $K_4^3$ or $F_1$. So assume that $\B$ has at least $5$ vertices (and at least $3$ hyperedges). Since $\B$ is not a crown, there exists a tight path of length $3$, say $abc,bcd,cde$. Since $abc$ is in the core, one of the pairs $ab$ or $ac$ is fat, so there exists a hyperedge $h\not =abc$ containing either $ab$ or $ac$. Similarly there exists a hyperedge $f\not = cde$ and $f$ contains $ed$ or $ec$. If $h=f$ then $\B \supseteq F_2$. However, it is easy to see that $F_2$ cannot be extended to a larger tightly-connected set of hyperedges without creating a Berge $5$-cycle, so in this case $\B = F_2$. If $h\not =f$ then the hyperedges $h,abc,bcd,cde,f$ create a Berge $5$-cycle in $H$, a contradiction.
This completes the proof of the lemma.
\end{proof}

\begin{observation}
\label{aboutcore}
Let $B$ be a block of $H$ and let $\B$ be the core of $B$. If $\B = \emptyset$ then the block $B$ is a crown, and if $\B \not = \emptyset$ then every fat pair of $B$ is contained in $\partial \B$.

\end{observation}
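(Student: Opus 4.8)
The plan is to prove Observation~\ref{aboutcore} in two parts, mirroring its statement.

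\medskip

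\textbf{First part: if $\B = \emptyset$ then $B$ is a crown.} I would argue contrapositively or directly by analyzing how $B$ is built from $\B$. By definition the core $\B$ is obtained from $B$ by deleting thin hyperedges, and $B$ is tightly-connected. If $\B=\emptyset$, then every hyperedge of $B$ is thin, i.e.\ each has at least two thin pairs. Start with a single hyperedge $h_1 = a_1b_1c_1$ and add hyperedges one at a time, each intersecting a previous one in two vertices (this is possible since $B$ is tightly-connected). I claim the resulting block must be a crown $a b c_1, abc_2, \dots, abc_k$ on a fixed fat pair $ab$. The key point: if at some stage we had a tight path of length $2$, say $xyz, yzw$ with $x \ne w$, then the pair $yz$ has codegree at least $2$ (it lies in both hyperedges), so $yz$ is fat; moreover in $xyz$ the two pairs other than $yz$ are $xy$ and $xz$ — for $xyz$ to be thin, at least two of $\{xy,xz,yz\}$ are thin, and since $yz$ is fat this forces both $xy$ and $xz$ to be thin. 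Now I would try to extend: any further hyperedge of $B$ must attach in two vertices to one of the existing hyperedges, and I would check that attaching anywhere other than along the pair $yz$ (producing another hyperedge $yzv$) would create either a longer tight path or a Berge $5$-cycle, or would force a previously-thin pair to become fat. The cleanest route is probably: show that in a block with empty core, there cannot be a tight path of length $3$ (since the middle argument of Lemma~\ref{blockstructure} shows such a configuration, together with the fatness forced on the end pairs, yields $F_2$ or a Berge-$C_5$, hence a nonempty core), and a tightly-connected hypergraph with no tight path of length $3$ and at least two hyperedges is exactly a crown. I expect the main obstacle is handling this casework carefully — making sure every way of extending a crown or a short tight path is accounted for — but the needed arguments are close variants of those already used in the proof of Lemma~\ref{blockstructure}.

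\medskip

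\textbf{Second part: if $\B \ne \emptyset$ then every fat pair of $B$ lies in $\partial\B$.} Let $xy$ be a fat pair of $B$, so there are at least two hyperedges $h_1, h_2 \in B$ containing $xy$. I want to show $xy \subseteq e$ for some $e \in \B$, i.e.\ some hyperedge containing $xy$ survives into the core. Suppose not: then every hyperedge containing $xy$ is thin and gets deleted when forming $\B$. Since $\B \ne \emptyset$, by Lemma~\ref{blockstructure} the core is $K_4^3$, $F_1$, $F_2$, or a crown of size $\geq 1$; in each case $\B$ is a small explicit hypergraph sitting inside the tightly-connected block $B$. I would use the connectivity of $B$ to find a short tight path from one of $h_1, h_2$ into $\B$, and then combine the two hyperedges through $xy$ with this path and a hyperedge of $\B$ (using that core hyperedges have their "connecting" pairs fat, so they too can be extended) to build a Berge $5$-cycle — contradiction. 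Concretely: $h_1, h_2$ share the fat pair $xy$; walk tightly from $h_2$ to reach $\B$ in as few steps as possible; since $\B$ contains at least one hyperedge with a fat pair available to close up, a closed alternating walk of length $5$ with distinct vertices and edges should be extractable. The delicate part here is controlling the length of the walk so that it is exactly $5$ and all vertices/edges are distinct, which is why one wants the path from $h_1,h_2$ into $\B$ to be short and why the explicit description of $\B$ from Lemma~\ref{blockstructure} is essential.

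\medskip

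\textbf{Main obstacle.} In both parts the real work is the Berge-$C_5$ extraction: ensuring the alternating sequence has the right length and genuinely distinct vertices and hyperedges. I would organize this by always keeping the tight path/walk as short as possible and invoking the classification in Lemma~\ref{blockstructure} so that only finitely many local configurations of $\B$ need to be examined by hand.
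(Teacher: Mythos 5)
The paper offers no proof of Observation~\ref{aboutcore} at all, because the intended argument is purely structural and very short: it uses only the fact that a thin pair has codegree $1$ in $H$, and needs neither the Berge-$C_5$-freeness of $H$ nor the classification in Lemma~\ref{blockstructure}. Your proposal misses this, and the second half of your plan is a wrong approach. Suppose the fat pair $xy$ lies in no core hyperedge; every hyperedge of $H$ containing $xy$ lies in $B$ (it shares the pair $xy$ with a hyperedge of $B$, so maximality of the block puts it in $B$), and by assumption each such hyperedge $xyc_i$ is thin. Since $xy$ is fat, thinness forces $xc_i$ and $yc_i$ to have codegree $1$, so no hyperedge outside $\{xyc_1,\dots,xyc_m\}$ can share two vertices with any $xyc_i$. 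Tight-connectivity of $B$ then forces $B=\{xyc_1,\dots,xyc_m\}$, so every hyperedge of $B$ is thin and $\B=\emptyset$, a contradiction. Your plan instead presupposes a tight walk from $h_1,h_2$ into the core and tries to close it into a Berge $5$-cycle; the walk you need does not exist (its nonexistence is the whole point), and even granting it, you have no control over its length, so the ``make it exactly $5$'' step you yourself flag as delicate is a genuine unresolved gap rather than a technicality.

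For the first half, your direct sketch is essentially correct once you observe that thinness is a fixed property of $H$: as soon as two hyperedges of $B$ share a pair $p$, that pair is fat and (all hyperedges being thin) every other pair of those hyperedges has codegree $1$, so each subsequently attached hyperedge can only attach along $p$, and $B$ is a crown --- no Berge-$C_5$ case and no ``longer tight path'' case arise. However, the ``cleanest route'' you then propose is flawed on two counts: the characterization ``tightly-connected, at least two hyperedges, no tight path of length $3$ $\Rightarrow$ crown'' is false ($F_1$, i.e.\ $K_4^3$ minus a hyperedge, is tightly-connected, has no tight $3$-path on five vertices, and is not a crown), and your justification for excluding a tight $3$-path misquotes Lemma~\ref{blockstructure}: with an empty core the end hyperedges force no fat end pairs at all, so no $F_2$/Berge-$C_5$ dichotomy appears; the correct (and simpler) reason is that the middle hyperedge of a tight $3$-path contains two fat pairs and hence is not thin, directly contradicting $\B=\emptyset$.
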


\vspace{3mm}

\begin{minipage}{\textwidth-3mm}
\textbf{Edge Decomposition of $G=\partial H$.} 
We define a decomposition $\mathcal D$ of the edges of $G$ into paths of length 2, triangles and $K_4$'s such as follows:

Let $B$ be a block of $H$ and $\B$ be its core. 

\vspace{1mm}

If $\B=\emptyset$, then $B$ is a crown-block $\{abc_1, abc_2, \ldots, abc_k\}$  (for some $k \ge 1$); we partition $\partial B$ into the triangle $abc_1$ and paths $ac_ib$ where $2 \le i \le k$. 

\vspace{1mm}

If $\B \not=\emptyset$, then our plan is to first partition $\partial B\setminus \partial \B$.
If $abc\in B\setminus \B$, then $abc$ is a thin hyperedge, so it contains at least $2$ thin pairs, say $ab$ and $bc$. We claim that the pair $ac$ is in $\partial{\B}$. Indeed, $ac$ has to be a fat pair, otherwise the block $B$ consists of only one hyperedge $abc$, so $\B =\emptyset$ contradicting the assumption. So by Observation \ref{aboutcore}, $ac$ has to be a pair in $\partial{\B}$. For every $abc\in B\setminus \B$ such that $ab$ and $bc$ are thin pairs, add the $2$-path $abc$ to the edge decomposition $\mathcal D$. This partitions all the edges in $\partial B\setminus \partial \B$ into paths of length $2$. So all we have left is to partition the edges of $\partial \B$.
\begin{itemize}
\itemsep0em 
    \item If $\B$ is a crown $\{abc_1, abc_2, \ldots, abc_k\}$ for some $k\geq 1$, then we partition $\partial B$ into the triangle $abc_1$ and paths $ac_ib$ where $2 \le i \le k$.
    \item If $\B=F_1=\{abc,bcd,acd\}$ then we partition $\partial \B$ into $2$-paths $abc,bdc$ and $cad$.
    \item If $\B=F_2=\{oab,obc,ocd,oda\}$ then we partition $\partial \B$ into $2$-paths $abo, bco, cdo$ and $dao$.
    \item Finally, if $\B=K_4^3=\{abc,abd,acd,bcd\}$ then we partition $\partial \B$ as $K_4$, i.e., we add $\partial \B=K_4$ as an element of $\D$.
\end{itemize}
Clearly, by Lemma \ref{blockstructure} we have no other cases left. Thus all of the edges of the graph $G$ are partitioned into paths of length 2, triangles and $K_4$'s.
\end{minipage}

\vspace{3mm}

\begin{observation}
\label{2pathfatedge}

\begin{itemize}

\item [ ]
\item [(a)] If $D$ is a triangle that belongs to $\D$, then there is a hyperedge $h\in H$ such that $D= \partial h$.

\item [(b)] If $abc$ is a $2$-path that belongs to $\D$, then $abc \in H$. Moreover $ac$ is a fat pair.

\item [(c)]If $D$ is a $K_4$ that belongs to $\D$, then there exists $F=K_4^3 \subseteq H$ such that $D = \partial F$.

\end{itemize}

\end{observation}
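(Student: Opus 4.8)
The plan is to verify all three parts by going through, case by case, the construction of the decomposition $\D$ given just above; every element of $\D$ is introduced in one of a handful of explicitly listed situations, and in each situation the asserted property is immediate on inspection. By Lemma \ref{blockstructure} this list of situations is exhaustive.

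For parts (a) and (c) I would argue as follows. A triangle is added to $\D$ only (i) when $\B=\emptyset$ and $B=\{abc_1,\ldots,abc_k\}$ is a crown-block, or (ii) when the core $\B$ itself is a crown $\{abc_1,\ldots,abc_k\}$; in both situations the triangle added is $abc_1$, which is the $2$-shadow of the hyperedge $abc_1\in B\subseteq H$, so (a) holds with $h:=abc_1$. A $K_4$ is added to $\D$ only when $\B=K_4^3=\{abc,abd,acd,bcd\}$, and then the element added is precisely $\partial\B$; hence (c) holds with $F:=\B$, which is a copy of $K_4^3$ contained in $H$.

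For part (b) I would enumerate the four ways a $2$-path $abc$ (with middle vertex $b$) can enter $\D$: either $abc$ is a thin hyperedge with $ab,bc$ thin; or it is a path $ac_ib$ arising from a crown core or crown-block; or it is one of $abc,bdc,cad$ arising from $\B=F_1$; or it is one of $abo,bco,cdo,dao$ arising from $\B=F_2$. In every case the three vertices of the path form a hyperedge of $H$: in the first case this hyperedge is $abc$ itself, and in the remaining cases the unordered triple of vertices equals one of the listed core hyperedges (for instance the path $ac_ib$ uses the hyperedge $abc_i$, the path $bdc$ uses $bcd$, and the path $dao$ uses $oda$). This establishes $abc\in H$. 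For the ``moreover'' statement we must check that the endpoint pair has codegree at least $2$: in the thin-hyperedge case this was already verified when the path was added to $\D$ (were $ac$ thin, the block $B$ would consist of $abc$ alone and $\B$ would be empty, contrary to assumption); in the crown case the pair $ab$ is contained in all $k\ge 2$ hyperedges $abc_1,\ldots,abc_k$; and in the $F_1$ and $F_2$ cases a direct inspection shows the relevant endpoint pair is contained in two of the four hyperedges (for example $ac$ lies in both $abc$ and $acd$ for the path $abc$ from $F_1$, and $ao$ lies in both $oab$ and $oda$ for the path $abo$ from $F_2$). Hence that pair is fat.

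This is all bookkeeping; the only points needing a little care are confirming that the enumeration of the situations producing triangles, $2$-paths and $K_4$'s is complete --- which is immediate from the displayed construction together with Lemma \ref{blockstructure} --- and, in the $F_1$ and $F_2$ sub-cases, matching each listed $2$-path with the correct endpoint pair and the correct two hyperedges witnessing its fatness. I do not anticipate any genuine obstacle.
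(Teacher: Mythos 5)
Your case analysis is correct and is exactly the routine verification the paper leaves implicit (the Observation is stated without proof, as it follows directly from the construction of $\D$ and Lemma \ref{blockstructure}). All the cases producing triangles, $2$-paths and $K_4$'s are enumerated correctly, and the fatness of the endpoint pair is justified in each case just as the construction itself does, so nothing is missing.
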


Let $\alpha_1 \abs{G}$ and $\alpha_2 \abs{G}$ be the number of edges of $G$ that are contained in triangles and $2$-paths of the edge-decomposition $\mathcal D$ of $G$, respectively. So $(1-\alpha_1-\alpha_2) \abs{G}$ edges of $G$ belong to the $K_4$'s in $\mathcal D$.

\begin{claim}
\label{alpha12}
We have,
$$\abs{H}=\left(\frac{\alpha_1}{3}+\frac{\alpha_2}{2}+\frac{2(1-\alpha_1-\alpha_2)}{3}\right)\abs{G}.$$
\end{claim}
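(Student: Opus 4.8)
The plan is to convert the identity into a bookkeeping argument over the blocks of $H$. First I would record the (easy) fact that the blocks partition $E(H)$ — two hyperedges meeting in two vertices lie in a common block, and the union of two blocks sharing a hyperedge is again tightly-connected, so each hyperedge belongs to exactly one block — and, for the same reason, the shadows $\partial B$ taken over all blocks partition $E(G)$. Writing $T,P,Q$ for the numbers of triangles, $2$-paths and $K_4$'s occurring in the decomposition $\D$, and using that $\D$ partitions $E(G)$ while a triangle, a $2$-path and a $K_4$ contain $3$, $2$ and $6$ edges respectively, the definitions of $\alpha_1,\alpha_2$ give $T=\alpha_1\abs G/3$, $P=\alpha_2\abs G/2$ and $Q=(1-\alpha_1-\alpha_2)\abs G/6$. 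Substituting these into the right-hand side of the claimed equation, the whole claim reduces to the single identity $\abs H=T+P+4Q$, which conceptually says that each hyperedge of $H$ can be charged to an element of $\D$ so that every triangle and every $2$-path of $\D$ is charged exactly once while every $K_4$ is charged exactly four times.

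To prove $\abs H=T+P+4Q$ I would argue block by block. For a block $B$ with core $\B$, let $T_B,P_B,Q_B$ be the numbers of triangles, $2$-paths and $K_4$'s that $\D$ receives from the partition of $\partial B$; it is enough to show $\abs B=T_B+P_B+4Q_B$ for every block, since summing then gives $\abs H=\sum_B\abs B=T+P+4Q$. This is exactly where Lemma \ref{blockstructure} enters: it forces $\B$ to be one of $\emptyset$, $K_4^3$, $F_1$, $F_2$, or a crown of size $k$, and in each case the construction of $\D$ fixes the contribution. Recall that each of the $\abs{B\setminus\B}$ thin hyperedges outside the core contributes exactly one $2$-path. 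Then: if $\B=\emptyset$, then (Observation \ref{aboutcore}) $B$ is a crown of size $k=\abs B$ and contributes one triangle and $k-1$ $2$-paths, so $T_B+P_B+4Q_B=1+(k-1)=\abs B$; if $\B$ is a nonempty crown of size $k$, then $\partial\B$ splits into one triangle and $k-1$ $2$-paths, giving $1+(k-1)+\abs{B\setminus\B}=\abs B$; if $\B=F_1$, then $\partial\B$ splits into three $2$-paths, giving $3+\abs{B\setminus\B}=\abs B$; if $\B=F_2$, then the proof of Lemma \ref{blockstructure} shows $F_2$ admits no tightly-connected extension, hence $B=\B=F_2$ as well and $\partial\B$ splits into four $2$-paths, giving $4=\abs B$; and if $\B=K_4^3$, then $\partial\B=K_4$ enters $\D$ as one $K_4$, giving $0+\abs{B\setminus\B}+4\cdot 1=\abs B$. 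In every case $\abs B=T_B+P_B+4Q_B$; summing over blocks and combining with the formulas for $T,P,Q$ yields $\abs H=\left(\frac{\alpha_1}{3}+\frac{\alpha_2}{2}+\frac{2(1-\alpha_1-\alpha_2)}{3}\right)\abs G$.

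I do not expect a genuine difficulty here — this is careful accounting rather than anything deep. The points that need care are: (i) the legitimacy of summing local counts, i.e.\ that the blocks partition $E(H)$ and the shadows $\partial B$ partition $E(G)$, so that $\D$ really is the disjoint union of the per-block decompositions; (ii) using the remark inside the proof of Lemma \ref{blockstructure} that a block whose core is $F_2$ must itself equal $F_2$, since otherwise stray thin hyperedges would spoil the count there; and (iii) keeping track of the fact that a $K_4$ of $\D$ contributes $4$ to $\abs H$ (it encodes the four hyperedges of a $K_4^3$), whereas each triangle and each $2$-path of $\D$ contributes only $1$ (each encoding a single hyperedge). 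As a sanity check, on the Bollob\'as--Gy\H ori example every block is a crown with empty core, so $\alpha_1\to 0$, $\alpha_2\to 1$, $1-\alpha_1-\alpha_2\to 0$, and the formula correctly returns $\abs H\sim\abs G/2$.
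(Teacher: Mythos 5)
Your proof is correct and follows essentially the same accounting as the paper: each hyperedge outside a core contributes exactly one $2$-path, and each core contributes as many elements of $\D$ as it has hyperedges except when it is $K_4^3$, whose single $K_4$ stands for four hyperedges. (One small remark: in the $F_2$ case you do not actually need $B=\B$ — thin hyperedges outside the core would each still add one $2$-path, so the per-block identity $\abs{B}=T_B+P_B+4Q_B$ holds regardless.)
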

\begin{proof}
Let $B$ be a block with the core $\B$. Recall that for each hyperedge $h \in B \setminus \B$, we have added exactly one $2$-path or a triangle to $\D$.

Moreover, because of the way we partitioned $\partial \B$, it is easy to check that in all of the cases except when $\B = K_4^3$, the number of hyperedges of $\B$ is the same as the number of elements of $\mathcal D$ that $\partial \B$ is partitioned into; these elements being $2$-paths and triangles. On the other hand, if $\B = K_4^3$, then the number of hyperedges of $\B$ is $4$ but we added only one element to $\D$ (namely $K_4$). 

This shows that the number of hyperedges of $H$ is equal to the number of elements of $\D$ that are $2$-paths or triangles plus the number of hyperedges which are in copies of $K_4^3$ in $H$, i.e., $4$ times the number of $K_4$'s in $\D$. Since $\alpha_1 \abs{G}$ edges of $G$ are in $2$-paths, the number of elements of $\D$ that are $2$-paths is $\alpha_1 \abs{G}/2$. Similarly, the number of elements of $\D$ that are triangles is $\alpha_2 \abs{G}/3$, and the number of $K_4$'s in $\D$ is $(1-\alpha_1-\alpha_2) \abs{G}/6$. Combining this with the discussion above finishes the proof of the claim.
\end{proof}






The link of a vertex $v$ is the graph consisting of the edges $\{uw \mid uvw \in H \}$ and is denoted by $L_v$.

\begin{claim}
\label{linksize}
$\abs{L_v}\leq 2\abs{N(v)}$. 
\end{claim}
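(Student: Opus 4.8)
The plan is the following. First, observe that the vertex set of $L_v$ is exactly $N(v)$: each $x\in N(v)$ lies in a hyperedge $vxy\in H$, hence is an endpoint of the edge $xy$ of $L_v$; conversely, each endpoint of an edge of $L_v$ lies in a hyperedge containing $v$, hence lies in $N(v)$. So $\abs{V(L_v)}=\abs{N(v)}$, and it suffices to show $\abs{L_v}=\abs{E(L_v)}\le 2\abs{V(L_v)}$. The key step will be the structural fact that \emph{$L_v$ contains no cycle of length at least $5$}, and I would establish this first.

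To prove it, suppose for contradiction that $a_1a_2\cdots a_k a_1$ is a cycle in $L_v$ with $k\ge 5$; by the definition of the link, all the hyperedges $va_ia_{i+1}$ (indices modulo $k$) lie in $H$. Then the alternating sequence
\[
v,\; va_ka_1,\; a_1,\; va_1a_2,\; a_2,\; va_2a_3,\; a_3,\; va_3a_4,\; a_4,\; va_4a_5,\; v
\]
would be a Berge-$C_5$ in $H$: the five ``cycle vertices'' $v,a_1,a_2,a_3,a_4$ are distinct; each consecutive pair among them is contained in the hyperedge listed between them (note in particular $v,a_1\in va_ka_1$ and $a_4,v\in va_4a_5$); and the five listed hyperedges are distinct, since $va_1a_2,va_2a_3,va_3a_4,va_4a_5$ arise from four distinct edges of the cycle, while $va_ka_1$ differs from $va_1a_2$ (as $a_k\ne a_2$ because $k\ge 5$) and from the other three (none of which contains $a_1$). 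This contradicts the Berge-$C_5$-freeness of $H$, establishing the fact.

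It then remains to note that a graph on $N$ vertices in which every cycle has length at most $4$ has at most $2(N-1)$ edges. This is immediate from the Erd\H{o}s--Gallai theorem on circumference; alternatively one can argue directly by decomposing $L_v$ into blocks: since $\sum_{B}\bigl(\abs{V(B)}-1\bigr)=\abs{V(L_v)}-(\text{number of components of }L_v)$, it is enough to check $\abs{E(B)}\le 2\bigl(\abs{V(B)}-1\bigr)$ for every block $B$, and a short ``fan'' argument shows that a $2$-connected graph of circumference at most $4$ must be $K_3$, $K_4$, or a complete bipartite graph $K_{2,m}$ with at most one further edge added inside the part of size $2$, each of which satisfies the bound (a single-edge block satisfies it trivially). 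Applying this to $L_v$ and using $\abs{V(L_v)}=\abs{N(v)}$ yields $\abs{L_v}\le 2(\abs{N(v)}-1)<2\abs{N(v)}$, which is in fact slightly stronger than claimed.

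I expect the only real work to be in the middle paragraph: verifying carefully that the displayed alternating sequence is a genuine Berge-$C_5$ — all five hyperedges distinct, all five vertices distinct, and the incidences correct — the subtle point being the ``wrap-around'' hyperedge $va_ka_1$ when $k=5$ versus $k\ge 6$; and, if one wishes to avoid quoting Erd\H{o}s--Gallai, the routine but slightly fiddly classification of $2$-connected graphs of circumference at most $4$. Everything else is bookkeeping.
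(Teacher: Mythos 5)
Your proof is correct, but it runs along a slightly different track than the paper's. The paper's argument shows that $L_v$ contains no \emph{path} with five edges (a path $v_0v_1\ldots v_5$ in $L_v$ yields the five distinct hyperedges $vv_0v_1,\ldots,vv_4v_5$, which form a Berge-$C_5$ through the pairs $vv_1,v_1v_2,v_2v_3,v_3v_4,v_4v$) and then applies the Erd\H{o}s--Gallai theorem for paths, giving $\abs{L_v}\le\frac{5-1}{2}\abs{N(v)}=2\abs{N(v)}$. You instead show that $L_v$ has circumference at most $4$ -- your verification of the Berge-$C_5$ (five distinct vertices $v,a_1,\ldots,a_4$, five distinct hyperedges, including the wrap-around edge when $k=5$) is sound, and note that this is not a consequence of the paper's path statement, since a $5$-cycle contains no path with five edges -- and then you invoke the circumference version of Erd\H{o}s--Gallai, which gives the marginally stronger bound $\abs{L_v}\le 2(\abs{N(v)}-1)$. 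Both routes extract essentially the same Berge-$C_5$ from the link and both cost one application of an Erd\H{o}s--Gallai-type theorem, so neither has a real advantage here; the paper's path version is the more standard citation and avoids your optional block-decomposition detour, whose classification of $2$-connected circumference-$4$ graphs is correct but is the fiddliest part of your write-up and is not needed once Erd\H{o}s--Gallai is quoted.
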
 
\begin{proof}
First let us notice that there is no path of length $5$ in $L_v$. Indeed, otherwise, there exist vertices $v_0,v_1, \ldots, v_5$ such that $vv_{i-1}v_i\in H$ for each $1\leq i \leq 5$ which means there is a Berge $5$-cycle in $H$ formed by the hyperedges containing the pairs $vv_1, v_1v_2, v_2v_3, v_3v_4, v_4v$, a contradiction. So by the Erd\H os-Gallai theorem $\abs{L_v}\leq \frac{5-1}2\abs {N(v)}$, proving the claim.
\end{proof}

\begin{lemma}
\label{neighborhood edges}
Let $v\in V(H)$ be an arbitrary vertex, then the number of edges in $G[N(v)]$ is less than $8\abs{N(v)}$.
\end{lemma}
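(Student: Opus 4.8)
The plan is to bound the edges of $G[N(v)]$ by analyzing what a neighbor's structure can look like inside the link $L_v$ together with the Berge-$C_5$-freeness. Fix $v$ and write $N = N(v)$. Every edge $uw \in G[N]$ comes from some hyperedge of $H$ containing $u$ and $w$; I want to charge these edges to structures I can control. The key point is that $L_v$ itself has no path of length $5$ (Claim~\ref{linksize}), so $L_v$ is a sparse, highly structured graph on $N$; but $G[N]$ can contain edges not coming from the link, so the main work is to show these ``extra'' edges are also few.

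First I would classify edges of $G[N]$ into those in $L_v$ and those in $G[N]\setminus L_v$. For the first type, Claim~\ref{linksize} already gives $|L_v| \le 2|N|$. For an edge $uw \in G[N] \setminus L_v$: there is a hyperedge $e = uwz$ with $z \ne v$ (since $uw \notin L_v$), and since $u,w \in N(v)$ there are hyperedges $h_u \ni \{v,u\}$ and $h_w \ni \{v,w\}$. If $h_u, e, h_w$ were three distinct hyperedges one could try to close a short Berge cycle, but a Berge-$C_5$ needs five edges, so a single such edge is not immediately forbidden; instead I would count how many vertices $w$ can be joined to a fixed $u$ by such edges. The crucial observation is that if $u$ has two neighbors $w_1, w_2$ in $G[N]\setminus L_v$ via hyperedges $uw_1z_1$ and $uw_2z_2$, and $w_1,w_2$ also have link-edges or further structure to $v$, then combined with $vuw_i$-type paths and the hyperedges $h_{w_i}$ we can build a Berge-$C_5$ through $v$ unless the $z_i$'s or the hyperedges coincide heavily. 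So I expect a bound of the form: for each vertex $u \in N$, the number of edges of $G[N]$ at $u$ not accounted for by $L_v$ and not accounted for by a bounded number of ``exceptional'' hyperedges at $u$ is $O(1)$, and summing gives $O(|N|)$.

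More concretely, I would argue as follows. Consider the auxiliary bipartite-like incidence: each edge $uw$ of $G[N]$ chooses a hyperedge $e_{uw}$ of $H$ with $\{u,w\} \subset e_{uw}$. Group the edges of $G[N]$ by this hyperedge. A single hyperedge contributes at most $3$ edges to $G[N]$, so it suffices to bound the number of hyperedges of $H$ that meet $N$ in at least two vertices — but that could be large, so I instead bound, for each hyperedge $e$ meeting $N$ in two vertices $u,w$, a ``budget'' tied to $v$: the pair $\{v,u\}$ and the pair $\{v,w\}$ lie in hyperedges $h_u, h_w$, and if we have four such hyperedges $e_1 \ni \{u_1,w_1\}$, $e_2 \ni \{u_2,w_2\}$ pairwise vertex-consistent with distinct connecting hyperedges to $v$, we get a Berge-$C_5$. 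This forces that the ``link-type'' graph on $N$ formed by picking one representative edge per such hyperedge again has no long path — essentially a path of length $5$ in this auxiliary graph, lifted using the $h_u$'s and the $e$'s, yields the forbidden cycle. Then Erd\H os–Gallai again gives a linear bound, and adding the at-most-$2$ edges lost per hyperedge from the grouping plus $|L_v| \le 2|N|$ yields a total below $8|N|$.

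The main obstacle will be handling the degenerate cases where the connecting hyperedges $h_u, h_w$ coincide with each other or with the hyperedge $e_{uw}$, and where several edges of $G[N]$ come from the same hyperedge or from hyperedges sharing two vertices (so that the naive lift does not produce \emph{distinct} hyperedges $h_1,\dots,h_5$ as required by the definition of Berge-$C_5$). Controlling these overlaps — making sure one can always choose five distinct hyperedges realizing the cycle, or else absorb the bad configurations into a bounded additive error — is where the constant $8$ (rather than something smaller) comes from, and is the part that needs the most careful bookkeeping. I would organize this by first proving the clean statement ``the auxiliary representative graph on $N$ has no $P_6$'' under a genericity assumption on overlaps, then separately bounding the number of edges involved in any overlap by $O(|N|)$ using codegree and block-structure facts (Lemma~\ref{blockstructure}, Observation~\ref{aboutcore}) from earlier in the paper.
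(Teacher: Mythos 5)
Your overall skeleton matches the paper's: split $G[N(v)]$ into the link $L_v$ (bounded by $2\abs{N(v)}$ via Claim~\ref{linksize}) and the edges coming from hyperedges avoiding $v$, then show an auxiliary graph on $N(v)$ has no long path and finish with Erd\H{o}s--Gallai. The gap is in the core step: your claim that a path with $6$ vertices in the auxiliary graph lifts to a Berge $5$-cycle is false as stated, and your plan for repairing it does not work. Two degeneracies block the naive lift. First, consecutive edges of the path may be covered by the \emph{same} hyperedge (a hyperedge $xyz$ covers two consecutive path edges), so a path of length $5$ only guarantees about $3$ distinct hyperedges avoiding $v$, not $5$. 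Second, and more seriously, the two hyperedges $f_1\supset vu_{\mathrm{first}}$ and $f_2\supset vu_{\mathrm{last}}$ attaching the endpoints to $v$ may coincide, in which case no Berge $5$-cycle arises at all. Neither of these is a ``rare overlap'' that can be absorbed into an $O(\abs{N(v)})$ error: the coincidence $f_1=f_2$ is a property of the pair of endpoints, not of a bounded set of exceptional hyperedges, and the codegree/block-structure facts (Lemma~\ref{blockstructure}, Observation~\ref{aboutcore}) give you no handle on it. Your accounting of ``at most $2$ edges lost per hyperedge'' is also circular, since the number of such hyperedges is exactly what you are trying to bound.

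The paper's proof resolves both degeneracies with one quantitative idea that is absent from your sketch: it forbids a much longer path. Assuming a path $v_0,\ldots,v_{12}$ of length $12$ in the auxiliary graph $G_v$, one passes to every other vertex to get $u_0,\ldots,u_6$ together with \emph{six pairwise distinct} hyperedges $h_1,\ldots,h_6$ avoiding $v$ (this kills the first degeneracy), and then uses the \emph{three} attachment points $u_0,u_3,u_6$: among the hyperedges $f_1\supset vu_0$, $f_2\supset vu_3$, $f_3\supset vu_6$, at least two consecutive ones differ, and whichever pair differs closes a Berge $5$-cycle with the corresponding three $h_i$'s (this kills the second degeneracy). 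Erd\H{o}s--Gallai for $P_{13}$-free graphs then gives $\abs{G_v}\le\frac{11}{2}\abs{N(v)}<6\abs{N(v)}$, and adding $\abs{L_v}\le 2\abs{N(v)}$ yields the stated bound $8\abs{N(v)}$ (which is why the constant is $8$, not something coming from overlap bookkeeping). To fix your write-up, replace the ``no $P_6$ under genericity'' claim with this longer-path argument; without it the proof does not go through.
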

\begin{proof}
Let $G_v$ be a subgraph of $G$ on a vertex set $N(v)$, such that $xy \in G_v$ if and only if there exists a vertex $z \not=v$ such that $xyz\in H$. Then each edge of $G[N(v)]$ belongs to either $L_v$ or $G_v$, so $\abs{G[N(v)]}\leq \abs{L_v}+ \abs{G_v}$. Combining this with Claim \ref{linksize}, we get $\abs{G[N(v)]}\leq \abs{G_v}+2\abs{N(v)}$. So it suffices to prove that $\abs{G_v}<6\abs{N(v)}$.

First we will prove that there is no path of length $12$ in $G_v$.
Let us assume by contradiction that $P=v_0,v_1, \ldots, v_{12}$ is a path in $G_v$.
Since for each pair of vertices $v_i,v_{i+1}$, there is a hyperedge $v_iv_{i+1}x$ in $H$ where $x\not= v$, we can conclude that there is a subsequence $u_0,u_1, \ldots, u_6$ of $v_0, v_1, \ldots, v_{12}$ and a sequence of distinct hyperedges $h_1, h_2, \ldots, h_6$, such that $u_{i-1}u_i \subset h_i$ and $v\notin h_i$ for each $1\leq i \leq 6$. Since $u_0,u_3,u_6\in N(v)$ there exist hyperedges $f_1,f_2,f_3\in H$ such that $vu_0 \subset  f_1$, $vu_3 \subset f_2$ and $vu_6 \subset f_3$. Clearly, either $f_1\not =f_2$ or $f_2\not =f_3$. In the first case the hyperedges $f_1,h_1,h_2,h_3,f_2$, and in the second case the hyperedges $f_2,h_4,h_5,h_6,f_3$ form a Berge $5$-cycle in $H$, a contradiction.

Therefore, there is no path of length $12$ in $G_v$, so by the Erd\H os-Gallai theorem, the number of edges in $G_v$ is at most $\frac{12-1}2\abs{N(v)}<6\abs{N(v)}$, as required.
\end{proof}

\subsection{Relating the hypergraph degree to the degree in the shadow}
For a vertex $v\in V(H)=V(G)$, let $d(v)$ denote the degree of $v$ in $H$ and let $d_G(v)$ denote the degree of $v$ in $G$ (i.e., $d_G(v)$ is the degree in the shadow). 

Clearly $d_G(v)\leq 2d(v)$. Moreover, $d(v)=\abs{L_v}$ and $d_G(v)=\abs{N(v)}$. So by Claim \ref{linksize}, we have
\begin{equation}
\label{degreeconnection}
    \frac{d_G(v)}{2}\leq d(v) \leq 2 d_G(v).
\end{equation}

Let $\overline d$ and $\overline d_G$ be the average degrees of $H$ and $G$ respectively.

Suppose there is a vertex $v$ of $H$, such that $d(v) < \overline d/3$. Then we may delete $v$ and all the edges incident to $v$ from $H$ to obtain a graph $H'$ whose average degree is more than $3(n\overline d/3-\overline d/3)/(n-1) =\overline  d$. Then it is easy to see that if the theorem holds for $H'$, then it holds for $H$ as well. Repeating this procedure, we may assume that for every vertex $v$ of $H$, $d(v) \ge \overline d/3$. Therefore, by \eqref{degreeconnection}, we may assume that the degree of every vertex of $G$ is at least $\overline d/6$.

\subsection{Counting paths of length $3$}

\begin{definition}
A $2$-path in $\partial H$ is called \emph{bad} if both of its edges are contained in a triangle of $\partial H$, otherwise it is called \emph{good}.
\end{definition}


\begin{lemma}
\label{weirdlemma}
For any vertex $v\in V(G)$ and a set $M\subseteq N(v)$, let $\P$ be the set of the good $2$-paths $vxy$ such that $x\in M$. Let $M'=\{y\mid vxy\in \P\}$ then $\abs{\P}< 2\abs{M'}+48d_G(v)$.
\end{lemma}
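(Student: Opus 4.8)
The plan is to bound $|\mathcal{P}|$ by controlling, for each $y \in M'$, how many good $2$-paths $vxy$ can end at $y$, and separately how many paths can have their endpoint $y$ lying outside a bounded-size exceptional set. First I would fix $v$ and consider the auxiliary bipartite-like incidence structure: each good $2$-path $vxy \in \mathcal{P}$ uses an edge $vx$ of $G$ with $x \in M \subseteq N(v)$ and an edge $xy$ of $G$. Since $x \in N(v)$, there is a hyperedge through $v$ and $x$, and since $xy \in \partial H$, there is a hyperedge containing $xy$; the ``good'' condition says not both of $vx,xy$ lie in triangles of $\partial H$.

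The key step is a Berge-$C_5$ argument limiting the number of $y$'s reachable ``many times.'' Suppose $y \in M'$ is the endpoint of many good $2$-paths $vx_1y, vx_2y, \dots$ with the $x_i$ distinct. Each gives a path $v x_i y$ in $G$; if we can find three internally disjoint such paths, or more precisely find two of them together with extra edges at $v$ and $y$, we would try to produce a Berge-$C_5$. Concretely, I expect the argument to run through the structure of blocks and cores established in Lemma~\ref{blockstructure} and Observation~\ref{aboutcore}: a vertex $y$ that is the endpoint of many paths forces many fat pairs or many hyperedges incident to $y$, and the edges $x_i y$ either lie in a common block or spread across blocks; spreading across blocks quickly yields long tight paths, hence Berge-$C_5$'s via the same mechanism as in Lemma~\ref{neighborhood edges} and Claim~\ref{linksize}. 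The goal is to show each $y \in M'$ receives at most $2$ paths of $\mathcal{P}$ except for those $y$ that can be charged to the $O(d_G(v))$ edges at $v$ or to triangles through $v$ — this is where the additive $48 d_G(v)$ term comes from, presumably $48 = 8 \cdot 6$ or similar, combining the bound $|G[N(v)]| < 8|N(v)|$ from Lemma~\ref{neighborhood edges} with a constant from the Erd\H{o}s--Gallai-type path bounds.

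More precisely, I would split $\mathcal{P}$ according to whether the middle vertex $x$ and endpoint $y$ both lie in $N(v)$ or not. If $y \in N(v)$, then $vxy$ together with another hyperedge at $v$ is dangerously close to a Berge-$C_5$ or a short Berge cycle, so the number of such paths is controlled by $|G[N(v)]| < 8|N(v)| = 8d_G(v)$, already accounting for a chunk of the additive term. If $y \notin N(v)$, then the edge $xy$ is ``new,'' and I would argue that for a fixed $y \notin N(v)$ there are at most two choices of $x$ with $vxy \in \mathcal{P}$ good: a third choice $vx_1y, vx_2y, vx_3y$ would give, using distinctness of hyperedges guaranteed by the good/thin structure, a Berge-$C_4$ or we could attach one more edge to reach length $5$. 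Summing ``at most $2$ per $y$'' over $M'$ gives the $2|M'|$ term.

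The main obstacle I anticipate is handling the hyperedge-distinctness bookkeeping cleanly: when several $2$-paths $vx_iy$ share structure, the hyperedges realizing the pairs $vx_i$, $x_iy$ may coincide, so a naive count of ``$5$ pairs $\Rightarrow$ Berge-$C_5$'' fails. This is exactly why the \emph{good} hypothesis is imposed — it forces at least one of the two edges of each $2$-path to avoid triangles, which should let me pick genuinely distinct hyperedges (a triangle in $\partial H$ is the obstruction to distinctness, cf. Observation~\ref{2pathfatedge}). I would therefore first prove a small sublemma: if $vxy$ is a good $2$-path then one can choose distinct hyperedges $h, h' \in H$ with $vx \subset h$, $xy \subset h'$; and if $vx_1y$, $vx_2y$ are two good $2$-paths with $x_1 \ne x_2$, the four (or fewer) hyperedges involved can be arranged to avoid forced coincidences unless $y$ falls into the exceptional set. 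Once that bookkeeping is pinned down, the Berge-$C_5$ extraction and the final summation are routine.
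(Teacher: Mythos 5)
The central step of your plan---that, outside an exceptional set chargeable to $O(d_G(v))$ ``edges at $v$ or triangles through $v$,'' each $y\in M'$ is the endpoint of at most two good $2$-paths $vxy$---is false, and no Berge-$C_5$ extraction can rescue it. Take the hypergraph with hyperedges $vx_it_i$ ($1\le i\le k$) and $x_iy_js_{ij}$ ($1\le i\le k$, $1\le j\le m$), all vertices distinct. Every pair has codegree $1$, and any $5$-cycle of the shadow must pass through some $t_i$ or $s_{ij}$, whose two incident shadow edges lie in a single hyperedge, so there is no Berge-$C_5$; note also that three or more paths $vx_iy$ to a common $y$ only create $4$-cycles $vx_iyx_jv$ in the shadow, so this configuration is not in itself forbidden. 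Yet with $M=\{x_1,\dots,x_k\}$ every $y_j$ is the endpoint of $k$ good $2$-paths $vx_iy_j$ (here $vy_j\notin G$), so the total excess over ``two per endpoint'' is $m(k-2)$, which is not $O(d_G(v))=O(k)$ once $m$ is large. What saves the lemma in this example is that $M'$ also contains the $km$ degree-one endpoints $s_{ij}$: the bound $2\abs{M'}$ must be obtained by amortizing over all of $M'$, not by a per-vertex cap plus a small exceptional set.

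That amortization is exactly what the paper does and what is missing from your outline. The paper forms the bipartite graph $B_{\P}$ of pairs $xy$ with $x\in M$, $y\in M'$, deletes the at most $48\abs{N(v)}$ shadow edges of $E=\{xyz\in H \mid x,y\in N(v),\ \codeg(x,y)\le 2\}$ (via Lemma \ref{neighborhood edges}; this is the true source of the additive $48d_G(v)$---not paths ending inside $N(v)$, a case which is empty since a good $2$-path $vxy$ has $vy\notin G$ by definition), and then proves the key claim: in the remaining graph $B$, every $y$ of degree $k\ge 3$ can be assigned $k-2$ vertices of $M'$ of degree $1$, these sets being pairwise disjoint, whence the average degree of $M'$ in $B$ is at most $2$ and $\abs{B}\le 2\abs{M'}$. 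The Berge-$C_5$ arguments live inside the proof of that claim (showing the third vertices $w_i$ of the hyperedges $x_iyw_i\in H\setminus E$ lie in $M'$ and, with at most two exceptions, have degree $1$ in $B$), not in bounding the degree of $y$ itself; and the block/core machinery (Lemma \ref{blockstructure}, Observation \ref{aboutcore}) plays no role in this lemma. As written, your proposal therefore does not prove the statement; the ``at most $2$ per $y$'' step needs to be replaced by a charging/averaging argument of the above kind.
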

\begin{proof}
Let $B_{\P}=\{xy \mid x \in M, y \in M', xy \in G \}$ be a bipartite graph, clearly $\abs{B_{\P}}=\abs{\P}$. Let $E=\{xyz\in H\mid x,y\in N(v), \codeg(x,y)\leq 2\}$. By Lemma \ref{neighborhood edges}, $\abs{E}\leq2\cdot 8\abs{N(v)}$ so the number of edges of $2$-shadow of $E$ is $\abs{\partial E}\leq 48\abs{N(v)}$. Let $B=\{xy\in B_{\P}\mid \exists z\in V(H), xyz\in H\setminus E \}$. Then clearly,
\begin{equation}
\label{boundonb}
    \abs{B}\geq \abs{B_{\P}}-\abs{{\partial E}}\geq \abs{\P}-48\abs{N(v)}=\abs{\P}-48d_G(v).
\end{equation}
Let $d_B(x)$ denote the degree of a vertex $x$ in the graph $B$.

\begin{claim}
\label{bipartitegraph}
For every $y\in M'$ such that $d_B(y)=k\geq 3$, there exists a set of $k-2$ vertices $S_y \subseteq M'$ such that  $\forall w \in S_y$ we have $d_B(w)=1$. Moreover, $S_y\cap S_z=\emptyset$ for any $y\not=z \in M'$ (with $d_B(y), d_B(z) \geq 3$).
\end{claim}

\begin{proof}
Let $yx_1,yx_2, \ldots ,yx_k\in B$ be the edges of $B$ incident to $y$. For each $1\leq j \leq k$ let $f_j\in H$ be a hyperedge such that $vx_j\subset f_j$. For each $yx_i\in B$ clearly there is a hyperedge $yx_iw_i\in H\setminus E$. 

We claim that for each $1\le i \le k$, $w_i\in M'$. It is easy to see that $w_i\in N(v)$ or $w_i\in M'$ (because $vx_iw_i$ is a $2$-path in $G$).  Assume for a contradiction that $w_i \in N(v)$, then since $yx_iw_i\notin E$ we have, $\codeg (x_i,w_i)\geq 3$. Let $f\in H$ be a hyperedge such that $vw_i\subset f$. Now take $j\not=i$ such that $x_j\not= w_i$. If $f_j\not =f$ then since $\codeg (x_i,w_i)\geq 3$ there exists a hyperedge $h\supset x_iw_i$ such that $h\not=f$ and $h\not=x_iw_iy$, then the hyperedges $f,h,x_iw_iy,yx_jw_j,f_j$ form a Berge $5$-cycle. So $f_j=f$, therefore $f_j\not=f_i$. Similarly in this case, there exists a hyperedge $h\supset x_iw_i$ such that $h\not=f_i$ and $h\not=x_iw_iy$, therefore the hyperedges $f_i,h,x_iw_iy,yx_jw_j,f_j$ form a Berge $5$-cycle, a contradiction. So we proved that $w_i\in M'$ for each $1\leq i \leq k$.


\begin{claimunnumbered}
For all but at most $2$ of the $w_i$'s (where $1\leq i\leq k$), we have $d_B(w_i)=1$.
\end{claimunnumbered}

\begin{proof} If $d_B(w_i)=1$ for all $1\leq i \leq k$ then we are done, so we may assume that there is $1\leq i \leq k$ such that $d_B(w_i)\not=1$. 

For each $1\leq i \leq k$, $w_i\in M'$ and $x_iw_i\in \partial (H\setminus E)$ (because $x_iw_iy \in H\setminus E$), so it is clear that $d_B(w_i)\geq 1$. So $d_B(w_i)>1$. Then there is a vertex $x\in M\setminus \{x_i\}$ such that $w_ix\in B$. Let $f,h\in H$ be hyperedges with $w_ix\in h$ and $xv\in f$. 
If there are $j,l\in \{1,2,\ldots ,k\}\setminus \{i\}$ such that $x,x_j$ and $x_l$ are all different from each other, then clearly, either $f\not=f_j$ or $f\not = f_l$, so without loss of generality we may assume $f\not=f_j$. Then the hyperedges $f,h,w_ix_iy,yw_jx_j,f_j$ create a Berge cycle of length $5$, a contradiction. 
So there are no $j,l\in \{1,2,\ldots ,k\}\setminus \{i\}$ such that $x,x_j$ and $x_l$ are all different from each other. Clearly this is only possible when $k<4$ and there is a $j\in \{1,2,3\}\setminus \{i\}$ such that $x=x_j$. Let $l\in \{1,2,3\}\setminus \{i,j\}$. If $f_j\not =f_l$ then the hyperedges $f_j,h,w_ix_iy,yw_lx_l,f_l$ form a Berge $5$-cycle. Therefore $f_j=f_l$. So we proved that $d_B(w_i)\not=1$ implies that $k=3$ and for $\{j,l\}=\{1,2,3\}\setminus \{i\}$, we have $f_j=f_l$. So if $d_B(w_i)\not =1$ and $d_B(w_j)\not =1$ we have $f_j=f_l$ and $f_i=f_l$, which is impossible. So $d_B(w_j)=1$. So we proved that if for any $1\leq i \leq k$, $d_B(w_i)\not=1$ then $k=3$ and all but at most $2$ of the vertices in $\{w_1,w_2,w_3\}$ have degree $1$ in the graph $B$, as desired.
\end{proof}
We claim that for any $i\not =j$ where $d_B(w_i)=d_B(w_j)=1$ we have $w_i\not=w_j$. Indeed, if there exists $i\not =j$ such that $w_i=w_j$ then $w_ix_j$ and $w_ix_i$ are both adjacent to $w_i$ in the graph $B$ which contradicts to $d_B(w_i)=1$. So using the above claim, we conclude that the set $\{w_1,w_2,\ldots,w_k\}$ contains at least $k-2$ distinct elements with each having degree one in the graph $B$, so we can set $S_y$ to be the set of these $k-2$ elements. (Then of course $ \forall w_i\in S_y$ we have $d_B(w_i)=1$.) 

Now we have to prove that for each $z\not=y$ we have $S_y\cap S_z=\emptyset$. Assume by contradiction that $w_i\in S_z\cap S_y$ for some $z\not =y$. That is, there is some hyperedge $uw_iz\in H\setminus E$ where $u\in M$, moreover $u=x_i$ otherwise $d_B(w_i)>1$. So we have a hyperedge $x_iw_iz\in H\setminus E$ for some $z\in M'\setminus \{y\}$.
Let $j,l\in \{1,2,\ldots , k\}\setminus \{i\}$ such that $j\not=l$. Recall that $x_jv\subset f_j$ and  $x_lv\subset f_l$. Clearly either $f_j\not = f_i$ or $f_l\not= f_i$ so without loss of generality we can assume $f_j\not =f_i$. Then it is easy to see that the hyperedges $f_j,x_jw_jy,yx_iw_i,w_izx_i,f_i$ are all different and they create a Berge $5$-cycle ($x_jw_jy\not =yx_iw_i$ because $x_j\not =w_i$).
\end{proof}
For each $x\in M'$ with $d_B(x)=k\geq 3$, let $S_x$ be defined as in Claim \ref{bipartitegraph}. Then the average of the degrees of the vertices in $S_x\cup \{x\}$ in $B$ is $(k+\abs{S_x})/(k-1)= (2k-2)(k-1)=2$. Since the sets $S_x \cup x$ (with $x\in M'$, $d_B(x)\geq 3$) are disjoint, we can conclude that average degree of the set $M'$ is at most $2$. Therefore $2\abs{M'}\geq \abs{B}$. So by \eqref{boundonb} we have $2\abs{M'}\geq \abs{B}> \abs{\P}-48d_G(V)$, which completes the proof of the lemma. 
\end{proof}

\begin{claim}
\label{maxdegree}
We may assume that the maximum degree in the graph $G$ is less than $160\sqrt n$ when $n$ is large enough.
\end{claim}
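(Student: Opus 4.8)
The plan is to suppose for contradiction that some vertex $v$ has $d_G(v) \ge 160\sqrt n$, and derive a contradiction by counting good $2$-paths emanating from $v$ in two ways. On one hand, since $G[N(v)]$ has fewer than $8 d_G(v)$ edges by Lemma \ref{neighborhood edges}, the number of $2$-paths $vxy$ with $x \in N(v)$ and $y \in N(v)$ is small (linear in $d_G(v)$), so almost all $2$-paths $vxy$ leave $N(v)$, i.e.\ have $y \notin N(v)$. On the other hand, I would apply Lemma \ref{weirdlemma} with $M = N(v)$: writing $\P$ for the set of good $2$-paths $vxy$ with $x \in N(v)$ and $M' = \{y : vxy \in \P\}$, we get $\abs{\P} < 2\abs{M'} + 48 d_G(v)$. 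Since $M' \subseteq V(G)$ has at most $n$ vertices, this gives $\abs{\P} < 2n + 48 d_G(v)$.

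The lower bound on $\abs{\P}$ should come from the fact that $v$ has large degree in $G$ and every vertex of $G$ has degree at least $\overline d / 6$ (from the reduction in the previous subsection). Each edge $vx$ with $x \in N(v)$ extends to roughly $d_G(x) \ge \overline d/6$ many $2$-paths $vxy$; subtracting the $O(d_G(v))$ paths that stay inside $N(v)$ and the bad $2$-paths, one expects $\abs{\P} \gtrsim d_G(v) \cdot \overline d / 6$ up to lower-order terms. Comparing with the upper bound $\abs{\P} < 2n + 48 d_G(v)$ forces $d_G(v) \cdot \overline d / 6 \lesssim 2n$, i.e.\ $d_G(v) = O(n / \overline d)$. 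Since $\abs{G} = n \overline d / 2$ and, if the theorem failed, $\abs{G} \gtrsim \abs{H} \gtrsim n^{3/2}$ would make $\overline d \gtrsim \sqrt n$, we would get $d_G(v) = O(\sqrt n)$, contradicting $d_G(v) \ge 160\sqrt n$ once the constants are tracked. (If instead $\abs{H}$ is already small, say $o(n^{3/2})$, then Theorem \ref{mainbergec5} holds trivially and there is nothing to prove; so we may assume $\overline d$ is of order at least $\sqrt n$.)

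The main subtlety I anticipate is bounding the number of \emph{bad} $2$-paths $vxy$ (those where both $vx$ and $xy$ lie in triangles of $\partial H$) that must be discarded to go from "all $2$-paths from $v$" to "good $2$-paths from $v$", since Lemma \ref{weirdlemma} only controls good ones; I would need to argue that bad $2$-paths through $v$ are few, presumably again via Lemma \ref{neighborhood edges} or a direct structural argument using Berge-$C_5$-freeness (a triangle containing $vx$ has its third vertex in $N(v)$, which ties the count back to $\abs{G[N(v)]} < 8 d_G(v)$). The other routine-but-delicate point is propagating the deleted-low-degree-vertices reduction so that "$d_G(w) \ge \overline d/6$ for all $w$" may be assumed while also assuming $\overline d = \Omega(\sqrt n)$; these are compatible because the reduction only increases the average degree. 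Once these are in place, the arithmetic with the explicit constant $160$ is straightforward bookkeeping.
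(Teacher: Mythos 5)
Your proposal is correct and essentially the same as the paper's proof: both count good $2$-paths from a high-degree vertex $v$ in two ways, using Lemma \ref{weirdlemma} with $M=N(v)$ (so $\abs{\P}<2n+48\,d_G(v)$) for the upper bound, and the minimum-degree reduction ($d_G\ge \overline d/6$) together with Lemma \ref{neighborhood edges} (to discard the bad $2$-paths, which stay inside $N(v)$) for the lower bound. The only divergence is the endgame: the paper writes $d_G(v)=C\overline d$, notes that $C\ge 36$ would already yield a bound stronger than Theorem \ref{mainbergec5}, and then invokes the Bollob\'as--Gy\H{o}ri upper bound (Theorem \ref{BollobasGyorithm}) to get $\overline d=O(\sqrt n)$ and hence $d_G(v)<160\sqrt n$, whereas you argue directly that either $\overline d$ is below a constant times $\sqrt n$ (in which case $\abs{H}=n\overline d/3$ is already small enough that the theorem holds and there is nothing to prove) or $\overline d=\Omega(\sqrt n)$ and then $d_G(v)=O(n/\overline d)=O(\sqrt n)$ --- an equally valid, and slightly more self-contained, way to finish.
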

\begin{proof}

Let $v$ be an arbitrary vertex with $d_G(v) = C\overline d$ for some constant $C>0$. Let $\P$ be the set of the good $2$-paths starting from the vertex $v$. 
Then applying Lemma \ref{weirdlemma} with $M = N(v)$ and $M'=\{y\mid vxy\in \P\}$, we have $\abs{\P}< 2\abs{M'}+48 d_G(v)<2n+48\cdot C\overline d$.
Since the minimum degree in $G$ is at least $\overline d/6$, the number of (ordered) $2$-paths starting from $v$ is at least $d(v)\cdot(\overline d/6-1)=C\overline d\cdot (\overline d/6-1)$. Notice that the number of (ordered) bad $2$-paths starting at $v$ is the number of $2$-paths $vxy$ such that $x,y\in N(v)$. So by Lemma \ref{neighborhood edges}, this is at most $2\cdot 8 \abs{N(v)}=16C\overline d$, so the number of good $2$-paths is at least $C\overline d\cdot (\overline d/6-17)$. So $\abs{\P} \ge C\overline d\cdot (\overline d/6-17)$. 
Thus we have $$C \overline d\cdot (\overline d/6-17)\leq \abs{\P}<2n+48C\overline d.$$ So $C\overline d(\overline d/6-65)<2n$. Therefore, $6C(\overline d/6-65)^2<2n$, i.e., $\overline d<6\sqrt{n/3C}+390$,  so $\abs{H}=n\overline d/3\leq 2n\sqrt{n/3C}+130n$. 
If $C\geq 36$ we get that $\abs {H}\leq \frac{n^{3/2}}{3\sqrt{3}}+130n = \frac{n^{3/2}}{3\sqrt{3}}+O(n)$, proving Theorem \ref{mainbergec5}. So we may assume $C<36$.

Theorem \ref{BollobasGyorithm} implies that 
\begin{equation}
\label{gyoribollobasberge}
    \abs{H}=n\overline d/3 \le \sqrt{2}n^{3/2}+4.5n,
\end{equation} so $\overline d \le 3\sqrt 2\sqrt{n}+13.5$. So combining this with the fact that $C<36$, we have $d_G(v) = C\overline d <  108 \sqrt{2} \sqrt{n}+486<160\sqrt n$ for large enough $n$.
\end{proof}

Combining Lemma \ref{weirdlemma} and Claim \ref{maxdegree}, we obtain the following.

\begin{lemma}
\label{weirdcor}
For any vertex $v\in V(G)$ and a set $M\subseteq N(v)$, let $\P$ be the set of good $2$-paths $vxy$ such that $x\in M$. Let $M'=\{y\mid vxy\in \P\}$ then $\abs{\P}< 2\abs{M'}+7680 \sqrt n$ when $n$ is large enough.
\end{lemma}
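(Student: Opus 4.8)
The statement to prove is Lemma~\ref{weirdcor}, which asserts that $\abs{\P} < 2\abs{M'} + 7680\sqrt{n}$ for large $n$, where $\P$ is the set of good $2$-paths $vxy$ with $x \in M \subseteq N(v)$.

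The plan is simply to combine Lemma~\ref{weirdlemma} with the degree bound from Claim~\ref{maxdegree}. By Lemma~\ref{weirdlemma}, applied verbatim to the same $v$, $M$, $\P$, and $M'$, we already have $\abs{\P} < 2\abs{M'} + 48 d_G(v)$. So the only thing left is to replace the term $48 d_G(v)$ with something of the form $c\sqrt{n}$. This is exactly what Claim~\ref{maxdegree} provides: for $n$ large enough we may assume the maximum degree of $G$ is less than $160\sqrt{n}$, hence $d_G(v) < 160\sqrt{n}$ for every vertex $v$. Substituting, $48 d_G(v) < 48 \cdot 160 \sqrt{n} = 7680\sqrt{n}$, which yields $\abs{\P} < 2\abs{M'} + 7680\sqrt{n}$, as desired.

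The one subtlety worth spelling out is the meaning of ``we may assume'' in Claim~\ref{maxdegree}: that claim shows that either the desired bound in Theorem~\ref{mainbergec5} already holds outright (the case $C \ge 36$, i.e., when some vertex has very large degree), or else every vertex of $G$ has degree less than $160\sqrt{n}$. So the reduction is: if $H$ has a vertex of shadow-degree at least $160\sqrt{n}$, Theorem~\ref{mainbergec5} is already proved and there is nothing further to do; otherwise the maximum degree hypothesis holds and the substitution above goes through. In writing the proof I would phrase it as: ``By Claim~\ref{maxdegree} we may assume $\Delta(G) < 160\sqrt{n}$; then for every $v$, $d_G(v) < 160\sqrt{n}$, and the claim follows from Lemma~\ref{weirdlemma}.''

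There is no real obstacle here — the lemma is a bookkeeping corollary packaging together two previously established results, recorded separately so it can be cited cleanly in the density computations that follow. The only thing to be careful about is not to double-count or misstate the constant: $48 \times 160 = 7680$ exactly, so the stated bound is tight with respect to this derivation and no slack needs to be inserted.
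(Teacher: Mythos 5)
Your proposal is correct and is exactly how the paper obtains this lemma: it is stated as an immediate combination of Lemma~\ref{weirdlemma} and Claim~\ref{maxdegree}, with $48\cdot 160=7680$. Your remark about the conditional meaning of ``we may assume'' in Claim~\ref{maxdegree} is a faithful reading of the paper's reduction, so nothing is missing.
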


\begin{definition}
\label{3-walks}
A $3$-path $x_0,x_1,x_2,x_3$ is called \emph{good} if both $2$-paths $x_0,x_1,x_2$ and $x_1,x_2,x_3$ are good $2$-paths.
\end{definition}

\begin{claim}
\label{countingpaths}
The number of (ordered) good $3$-paths in $G$ is at least $n\overline d_G^3 - C_0n^{3/2}\overline d_G$ for some constant $C_0 >0$ (for large enough $n$).
\end{claim}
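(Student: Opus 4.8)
The plan is to count good $3$-paths by a double-counting argument anchored at the middle edge (or middle vertex), using the Blakley–Roy inequality to get the main term $n\overline{d}_G^3$ and then subtracting off the contribution of $3$-walks that fail to be good or fail to be paths. First I would recall that the Blakley–Roy inequality (or equivalently, convexity applied to the adjacency matrix of $G$) gives that the number of ordered $3$-walks in $G$ — i.e. sequences $x_0,x_1,x_2,x_3$ with consecutive vertices adjacent, repetitions allowed — is at least $n\overline{d}_G^3$. The number of walks equals $\mathbf{1}^{T} A^3 \mathbf{1}$ where $A$ is the adjacency matrix, and $\mathbf{1}^{T}A^3\mathbf{1} \ge \frac{(\mathbf{1}^{T}A\mathbf{1})^3}{(\mathbf{1}^{T}\mathbf{1})^2} = \frac{(n\overline{d}_G)^3}{n^2} = n\overline{d}_G^3$. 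So it remains to show that all but $O(n^{3/2}\overline{d}_G)$ of these ordered $3$-walks are in fact good $3$-paths.

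The second step is to bound the number of $3$-walks that are not paths, i.e. those with a repeated vertex. A $3$-walk with a repeated vertex either has $x_0 = x_2$, or $x_1 = x_3$, or $x_0 = x_3$ (the cases $x_0=x_1$ etc. are impossible since $G$ is simple). If $x_0 = x_2$ the walk is determined by an edge $x_1 x_2$ and a neighbor $x_3$ of $x_2$; the count is $\sum_v d_G(v)^2$, and similarly for $x_1 = x_3$. For $x_0 = x_3$ we get closed walks, counted by $\mathrm{tr}(A^3) = 6 \cdot (\#\text{triangles})$, which is dominated by $\sum_v d_G(v)^2$ as well. By Claim~\ref{maxdegree} the maximum degree in $G$ is less than $160\sqrt{n}$, so $\sum_v d_G(v)^2 \le 160\sqrt{n}\sum_v d_G(v) = 160\sqrt{n}\cdot n\overline{d}_G = 160 n^{3/2}\overline{d}_G$, which is of the desired form. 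The third step is to bound the number of $3$-walks that are paths but not \emph{good} $3$-paths: by Definition~\ref{3-walks} such a path has one of its two constituent $2$-paths bad, meaning both edges of that $2$-path lie in triangles of $\partial H$. A bad $2$-path $vxy$ has $x, y \in N(v)$ (since there is a hyperedge through $vx$ and a triangle forces the relevant structure — more directly, each edge of $G$ lies in a triangle only through hyperedges, and one checks $x,y \in N(v)$), so by Lemma~\ref{neighborhood edges} the number of bad $2$-paths through a fixed center $x_1$ is at most $16 d_G(x_1)$; extending each such bad $2$-path by one more edge in at most $160\sqrt{n}$ ways and summing over the center gives again $O(n^{3/2}\overline{d}_G)$ such $3$-walks. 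Summing over which of the two $2$-subpaths is bad only doubles this.

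Putting the three steps together: the number of good $3$-paths is at least (number of $3$-walks) $-$ (non-path walks) $-$ (path-walks that are not good) $\ge n\overline{d}_G^3 - C_0 n^{3/2}\overline{d}_G$ for a suitable constant $C_0$ (say $C_0 = 500$, absorbing all the constants above), for $n$ large enough. I expect the only genuinely delicate point to be the bookkeeping for "not good": one must make sure that a $3$-walk which is a genuine path but contains a bad $2$-path is counted, and that the bound "a bad $2$-path through $x_1$ has both other vertices in $N(x_1)$" is applied correctly — this is where Lemma~\ref{neighborhood edges} does the real work, and everything else is just invoking Blakley–Roy and the maximum-degree bound from Claim~\ref{maxdegree}. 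The rest is routine arithmetic with the constants.
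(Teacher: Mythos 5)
Your proposal is correct and follows essentially the same route as the paper: Blakley--Roy gives the $n\overline d_G^3$ count of ordered $3$-walks, and the error term is controlled exactly as in the paper via Lemma \ref{neighborhood edges} (at most $16d_G(x)$ bad $2$-paths per anchor vertex, since a bad $2$-path spans a triangle of $G$) together with the $160\sqrt n$ maximum-degree bound of Claim \ref{maxdegree}; the paper merely organizes the bookkeeping differently, counting all $2$-walks that are not good $2$-paths ($\le 17 d_G(x)$ per middle vertex) and extending each by at most $320\sqrt n$, whereas you split off the non-path walks via $\sum_v d_G(v)^2$ and $\mathrm{tr}(A^3)$. The only nit is the throwaway choice $C_0=500$, which does not in fact absorb your constants (the bad-$2$-path term alone is about $2\cdot 16\cdot 160\, n^{3/2}\overline d_G$), but since the claim only requires some constant this is immaterial.
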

\begin{proof}

First we will prove that the number of (ordered) $3$-walks that are not good $3$-paths is at most $5440n^{3/2}\overline d_G$. 

For any vertex $x\in V(H)$ if a path $yxz$ is a bad $2$-path then $zy$ is an edge of $G$, so the number of (ordered) bad $2$-paths whose middle vertex is $x$, is at most 2 times the number of edges in $G[N(x)]$, which is less than $2\cdot8\abs{N(x)}=16d_G(x)$ by Lemma \ref{neighborhood edges}. The number of $2$-walks which are not $2$-paths and whose middle vertex is $x$ is exactly $d_G(x)$. So the total number of (ordered) $2$-walks that are not good $2$-paths is at most $\sum_{x\in V(H)}17d_G(x)=17n\overline d_G$. 

Notice that, by definition, any (ordered) $3$-walk that is not a good $3$-path must contain a $2$-walk that is not a good $2$-path.
Moreover, if  $xyz$ is a $2$-walk that is not a good $2$-path, then the number of $3$-walks in $G$ containing it is at most $d_G(x)+d_G(z) < 320\sqrt n$ (for large enough $n$) by Claim \ref{maxdegree}. Therefore, the total number of (ordered) $3$-walks that are not good $3$-paths is at most $17n\overline d_G\cdot 320\sqrt n = 5440n^{3/2}\overline d_G$.

By the Blakley-Roy inequality, the total number of (ordered) 3-walks in $G$ is at least $n\overline d^3_G$. By the above discussion, all but at most $5440n^{3/2}\overline d_G$ of them are good $3$-paths, so letting $C_0 = 5440$ completes the proof of the claim.
\end{proof}

 
\begin{claim}
\label{trianglecomponent}
Let $\{a,b,c\}$ be the vertex set of a triangle that belongs to $\D$. (By Observation \ref{2pathfatedge} (a) $abc\in H$.) Then the number of good $3$-paths whose first edge is $ab, bc$ or $ca$ is at most $8n+C_1\sqrt n$ for some constant $C_1$ and for large enough $n$.
\end{claim}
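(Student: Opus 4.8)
The plan is to first pin down the exact shape of the good $3$-paths we have to count, then count them with Lemma~\ref{weirdcor}, and finally squeeze out the constant $8$ using Berge-$C_5$-freeness.

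Call an edge of $G$ \emph{triangular} if the element of $\D$ containing it is a triangle, and for a vertex $v$ write $\overline N(v)$ for the set of neighbours $u$ of $v$ in $G$ with $\{u,v\}$ not triangular. Let $x_0x_1x_2x_3$ be a good $3$-path with $\{x_0,x_1\}\in\{ab,bc,ca\}$. Then $x_0,x_1\in\{a,b,c\}$ and $\{x_0,x_1\}$ is triangular (it lies in the triangle $abc$ of $\D$); since $x_0x_1x_2$ is \emph{good}, the edge $\{x_1,x_2\}$ is therefore \emph{not} triangular. In particular $x_2\notin\{a,b,c\}$ (every edge inside $\{a,b,c\}$ lies in the triangle $abc$), and, crucially, the second $2$-path $x_1x_2x_3$ is automatically good. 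Hence the good $3$-paths to be counted are exactly the sequences $x_0,x_1,x_2,x_3$ with $x_0,x_1\in\{a,b,c\}$ distinct, $x_2\in\overline N(x_1)$, and $x_3\in N_G(x_2)\setminus\{x_0,x_1,x_2\}$.

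Fix $x_1\in\{a,b,c\}$. The number of such paths with this middle vertex of the first edge is at most $2\sum_{x_2\in\overline N(x_1)}d_G(x_2)$, the two factors being the ($\le 2$) choices of $x_0$ and the choices of $x_3$ (the constraints $x_3\notin\{x_0,x_1\}$ only decrease the count). Now $\sum_{x_2\in\overline N(x_1)}d_G(x_2)$ equals the number of ordered $2$-walks $x_1x_2x_3$ with $x_2\in\overline N(x_1)$: those with $x_3=x_1$ number $|\overline N(x_1)|\le d_G(x_1)<160\sqrt n$ by Claim~\ref{maxdegree}, while the rest are precisely the good $2$-paths $x_1x_2x_3$ with $x_2\in\overline N(x_1)$ (good because $\{x_1,x_2\}$ is not triangular). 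Applying Lemma~\ref{weirdcor} with $v=x_1$ and $M=\overline N(x_1)$, the number of the latter is less than $2|M'_{x_1}|+7680\sqrt n$, where $M'_{x_1}\subseteq V(G)$ is the set $M'$ of Lemma~\ref{weirdcor} for this choice of $v,M$. Summing over $x_1\in\{a,b,c\}$ (a good $3$-path has a unique $x_1$, so there is no double counting), the number of good $3$-paths with first edge in $\{ab,bc,ca\}$ is less than $4\bigl(|M'_a|+|M'_b|+|M'_c|\bigr)+C_1'\sqrt n$ for an absolute constant $C_1'$.

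Since $|M'_{x_1}|\le n$, this already gives an $O(n)$ bound; to reach $8n+C_1\sqrt n$ it suffices to prove $|M'_a|+|M'_b|+|M'_c|\le 2n+O(\sqrt n)$, i.e.\ that all but $O(\sqrt n)$ vertices lie in at most two of the sets $M'_a,M'_b,M'_c$. I would establish this by ruling out, essentially, that any vertex lies in all three: if $y\in M'_a\cap M'_b\cap M'_c$ then (as path endpoints, $y\notin\{a,b,c\}$) there are good $2$-paths $apy$, $bqy$, $cry$ with $\{a,p\},\{b,q\},\{c,r\}$ not triangular, so $p,q,r\notin\{a,b,c\}$; together with the hyperedge $abc$ one looks for a Berge-$C_5$ on $a,p,y,q,b$, with edges realised by the hyperedges containing $ap$, $py$, $yq$, $qb$ and the hyperedge $abc$. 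This is a genuine Berge-$C_5$ unless those five hyperedges coincide in pairs, and the heart of the argument — which I expect to be the main obstacle — is to check that in each such degenerate case either a different Berge-$C_5$ is available, or the coincidences force $p,q,r$ to collapse to a single common non-triangular neighbour $w$ of $a,b,c$ (so that $\{a,b,c,w\}$ carries hyperedges $abw,bcw,acw,abc$, i.e.\ essentially a $K_4^3$), which contradicts $\{a,b,c\}$ being a triangle of $\D$; and, in the handful of remaining cases, that such $y$ lie in a set of size $O(\sqrt n)$. Feeding $|M'_a|+|M'_b|+|M'_c|\le 2n+O(\sqrt n)$ back into the bound of the previous paragraph completes the proof.
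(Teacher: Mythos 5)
Your skeleton is the same as the paper's (fix the middle vertex $x_1\in\{a,b,c\}$ of the first edge, bound the good $2$-path tails through Lemma~\ref{weirdcor} by endpoint sets, then argue the endpoint sets cannot overlap too much via a Berge-$C_5$), but there are two problems. First, a definitional error: in the paper a $2$-path is bad when both of its edges lie in triangles of the shadow $\partial H$, i.e.\ arbitrary triangles of $G$, not triangles of the decomposition $\D$. Your ``triangular'' edges are defined through $\D$, and with that reading several of your intermediate assertions fail: an edge that sits in a $2$-path or $K_4$ of $\D$ can perfectly well lie in a triangle of $G$ (indeed, for a $2$-path $abc\in\D$ Observation \ref{2pathfatedge}(b) gives $abc\in H$, so $ab$ lies in the triangle $abc$ of $G$), so ``$x_1x_2x_3$ is automatically good'' and ``the rest are precisely the good $2$-paths'' are not justified. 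The counting step itself survives, because the tail $x_1x_2x_3$ of a good $3$-path is a good $2$-path by definition, so Lemma~\ref{weirdcor} applies to it directly without any detour through all $2$-walks; but the sets $M=\overline N(x_1)$ and hence $M'_{x_1}$ should be defined with ``$\{x_1,u\}$ lies in no triangle of $G$'', which is what goodness of $x_0x_1x_2$ actually forces.

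Second, and more seriously, the step that carries all the content — that (up to $O(\sqrt n)$ exceptions) no vertex lies in all three of $M'_a,M'_b,M'_c$ — is not proved: you list the degenerate coincidences ($p,q,r$ colliding, hyperedges coinciding) and defer them, and your proposed escape (``the coincidences force a $K_4^3$ on $\{a,b,c,w\}$, contradicting $abc$ being a triangle of $\D$'') is not where the cases actually land under your definitions, since with $\D$-triangularity a common neighbour $w$ of $a,b,c$ with, say, $aw$ inside a $2$-path of $\D$ is entirely possible. This overlap bound is exactly the heart of the paper's proof of the claim: there one shows every vertex lies in at most two of the six sets $S'_a,S'_b,S'_c,S'_{ab},S'_{bc},S'_{ac}$, and the partition into $S_x$ and $S_{xy}$ is what keeps the intermediate vertices distinct in the Berge-$C_5$ construction. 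Note that the correct definition of $\overline N$ would also dissolve your degeneracies at a stroke: if $ap$ lies in no triangle of $G$, then $p\notin N(b)\cup N(c)$ (else $abp$ or $acp$ would be a triangle), so for $y\in M'_a\cap M'_b\cap M'_c$ the intermediate vertices $p,q,r$ are automatically distinct and distinct from $a,b,c,y$, the hyperedges realizing $ap,py,yq,qb$ together with $abc$ are pairwise distinct (any coincidence would force four distinct vertices into one triple or a forbidden triangle), and the Berge-$C_5$ contradiction is immediate, giving $\abs{M'_a}+\abs{M'_b}+\abs{M'_c}\le 2n$ and hence the stated $8n+C_1\sqrt n$. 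As written, however, the proposal leaves this key claim as an acknowledged obstacle, so it does not constitute a proof.
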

\begin{proof}
Let $S_{abc}=N(a)\cap N(b)\cap N(c)$.
For each $\{x,y\}\subset \{a,b,c\}$, let $S_{xy}=N(x)\cap N(y)\setminus \{a,b,c\}$. For each $x \in \{a,b,c\}$, let $S_x=N(x)\setminus (N(y) \cup N(z) \cup \{a,b,c\})$ where $\{y, z\}= \{a, b, c\} \setminus \{x\}$.

For each $x\in \{a,b,c\}$, let $\P_x$ be the set of good $2$-paths $xuv$ where $u\in S_x$. Let $S'_x=\{v\mid xuv\in \P_x\}$.
For each $\{x,y\}\subset \{a,b,c\}$, let $\P_{xy}$ be the set of good $2$-paths $xuv$ and $yuv$ where $u\in S_{xy}$. Let $S'_{xy} =\{v\mid xuv\in \P_{xy}\}$.

Let $\{x,y\}\subset \{a,b,c\}$ and $z=\{a,b,c\}\setminus \{x,y\}$. 
Notice that each $2$-path $yuv \in \P_{xy}$ ($xuv \in \P_{xy}$), is contained in exactly one good $3$-path $zyuv$ (respectively $zxuv$) whose first edge is in the triangle $abc$. Indeed, since $u\in S_{xy}$, $xyuv$ (respectively $yxuv$) is not a good $3$-path. Therefore, the number of good $3$-paths whose first edge is in the triangle $abc$, and whose third vertex is in $S_{xy}$ is $\abs{\P_{xy}}$. The number of paths in $\P_{xy}$ that start with the vertex $x$ is less than $2\abs{S'_{xy}}+ 7680 \sqrt{n}$, by Lemma \ref{weirdcor}. Similarly, the number of paths in $\P_{xy}$ that start with the vertex $y$ is less than $2\abs{S'_{xy}}+ 7680 \sqrt{n}$. Since every path in $\P_{xy}$ starts with either $x$ or $y$, we have $\abs{\P_{xy}} < 4\abs{S'_{xy}}+15360 \sqrt{n}$. Therefore, for any $\{x,y\}\subset \{a,b,c\}$, the number of good $3$-paths whose first edge is in the triangle $abc$, and whose third vertex is in $S_{xy}$ is less than $4\abs{S'_{xy}}+15360 \sqrt{n}$.

In total, the number of good $3$-paths whose first edge is in the triangle $abc$ and whose third vertex is in $S_{ab}\cup S_{bc} \cup S_{ac}$ is at most 
\begin{equation}
\label{3}
    4(\abs{S'_{ab}}+\abs{S'_{bc}}+\abs{S'_{ac}})+46080 \sqrt{n}.
\end{equation}

Let $x \in \{a,b,c\}$ and $\{y, z\} = \{a,b,c\} \setminus \{x\}$.  For any $2$-path $xuv\in \P_x$ there are $2$ good $3$-paths with the first edge in the triangle $abc$, namely $yxuv$ and $zxuv$. So the total number of $3$-paths whose first edge is in the triangle $abc$ and whose third vertex is in $S_a\cup S_b\cup S_c$ is $2(\abs{\P_a}+\abs{\P_b}+\abs{\P_c})$, which is at most
\begin{equation}
\label {4}
4(\abs{S'_{a}}+\abs{S'_{b}}+\abs{S'_{c}})+46080  \sqrt{n},
\end{equation}
by Lemma \ref{weirdcor}.

Now we will prove that every vertex is in at most $2$ of the sets $S'_a,S'_b,S'_c,S'_{ab},S'_{bc},S'_{ac}$.
Let us assume by contradiction that a vertex $v\in V(G)\setminus \{a,b,c\}$ is in at least $3$ of them. 
We claim that there do not exist $3$ vertices $u_a\in N(a)\setminus \{b,c\}$, $u_b\in N(b)\setminus \{a,c\}$ and $u_c\in N(c)\setminus \{a,b\}$ such that $xu_xv$ is a good $3$-path for each $x\in \{a,b,c\}$. Indeed, otherwise, consider hyperedges $h_a, h'_a$ containing the pairs $au_a$ and $u_av$ respectively (since $au_av$ is a good $2$-path, note that $h_a \not = h_a'$), and hyperedges $h_b, h'_b, h_c, h'_c$ containing the pairs $bu_b, u_bv, cu_c, u_cv$ respectively. Then either $h_a'\not =h_b'$ or $h_a'\not = h_c'$, say $h_a'\not =h_b'$ without loss of generality. Then the hyperedges $h_a,h_a',h_b',h_b, abc$ create a Berge $5$-cycle in $H$, a contradiction, proving that it is impossible to have 3 vertices $u_a\in N(a)\setminus \{b,c\}$, $u_b\in N(b)\setminus \{a,c\}$ and $u_c\in N(c)\setminus \{a,b\}$ with the above mentioned property. Without loss of generality let us assume that there is no vertex $u_a\in N(a)\setminus \{b,c\}$ such that $au_av$ is a good $2$-path -- in other words, $v\notin S'_a\cup S'_{ab}\cup S'_{ac}$. However, since we assumed that $v$ is contained in at least 3 of the sets $S'_a,S'_b,S'_c,S'_{ab},S'_{bc},S'_{ac}$, we can conclude that $v$ is contained in all 3 of the sets $S'_b$, $S'_c$, $ S'_{bc}$, i.e., there are vertices $u_b\in S_b, u_c\in S_c, u\in S_{bc}$ such that $vu_bb,vu_cc,vub,vuc$ are good $2$-paths. Using a similar argument as before, if $vu\in h$, $vu_b\in h_b$ and $vu_c\in h_c$, without loss of generality we can assume that $h\not = h_b$, so the hyperedges $abc$,$h$,$h_b$ together with hyperedges containing $uc$ and $u_bb$ form a Berge $5$-cycle in $H$, a contradiction.

So we proved that $$2\abs{S'_a\cup S'_b \cup S'_c\cup S'_{ab} \cup S'_{bc}\cup S'_{ac}}\geq \abs{S'_{a}}+\abs{S'_{b}}+\abs{S'_{c}}+\abs{S'_{ab}}+\abs{S'_{bc}}+\abs{S'_{ac}}$$
This together with \eqref{3} and \eqref{4}, we get that the number of good $3$-paths whose first edge is in the triangle $abc$ is at most
$$8\abs{S'_a\cup S'_b \cup S'_c\cup S'_{ab} \cup S'_{bc}\cup S'_{ac}}+ 92160\sqrt{n}<8n+C_1\sqrt{n}$$
for $C_1=92160$ and large enough $n$, finishing the proof of the claim.
\end{proof}

\begin{claim}
\label{2pathcomponent}
Let $P=abc$ be a $2$-path and $P\in \D$. (By Observation \ref{2pathfatedge} (b) $abc\in H$.) Then the number of good $3$-paths whose first edge is $ab$ or $bc$ is at most $4n+C_2\sqrt n$ for some constant $C_2>0$ and large enough $n$. 
\end{claim}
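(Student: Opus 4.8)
The plan is to mimic the argument of Claim \ref{trianglecomponent}, but with the two endpoints $a$ and $c$ of the $2$-path playing the role of the three triangle-vertices there; since we now have only two "special" vertices, the partition of the relevant neighbourhood into pieces and the final counting both simplify, which is exactly why the bound improves from $8n+C_1\sqrt n$ to $4n+C_2\sqrt n$. First I would set $S_{ac}=N(a)\cap N(c)\setminus\{a,b,c\}$, and $S_a=N(a)\setminus(N(c)\cup\{a,b,c\})$, $S_c=N(c)\setminus(N(a)\cup\{a,b,c\})$, so that every vertex $u$ from which a good $2$-path $au\cdots$ or $cu\cdots$ can start (other than through $b$) lies in exactly one of $S_a$, $S_c$, $S_{ac}$. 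I would then let $\P_a$ (resp. $\P_c$) be the set of good $2$-paths $aut$ with $u\in S_a$ (resp. $cut$ with $u\in S_c$), and $\P_{ac}$ the set of good $2$-paths $aut$ and $cut$ with $u\in S_{ac}$; and set $S'_a,S'_c,S'_{ac}$ to be the corresponding sets of endpoints $t$. Each good $2$-path of the form $aut$ with $u\ne b$ extends to a good $3$-path with first edge in $\{ab,bc\}$ in exactly one way, namely $baut$ (note $bault$-type extensions through $c$ are ruled out since $u\notin N(c)$ when $u\in S_a$, and when $u\in S_{ac}$ the $3$-path $caut$ fails to be good because $u\in S_{ac}\subseteq N(c)$ makes $aut$ together with the edge $ca$ a bad configuration — this is the same observation as in the triangle case). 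Symmetrically for $2$-paths $cut$. So the total count of good $3$-paths with first edge $ab$ or $bc$ equals $\abs{\P_a}+\abs{\P_c}+\abs{\P_{ac}}$ plus the $O(\sqrt n)$ contribution of good $3$-paths whose third vertex is $b$-adjacent in a degenerate way — the latter is absorbed into the error term using Claim \ref{maxdegree}.

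Next I would apply Lemma \ref{weirdcor} to each piece: $\abs{\P_a}<2\abs{S'_a}+7680\sqrt n$, $\abs{\P_c}<2\abs{S'_c}+7680\sqrt n$, and for $\P_{ac}$, since every path in it starts at $a$ or at $c$ and each of the two sub-collections has size $<2\abs{S'_{ac}}+7680\sqrt n$, we get $\abs{\P_{ac}}<4\abs{S'_{ac}}+15360\sqrt n$. This gives an upper bound of $2\abs{S'_a}+2\abs{S'_c}+4\abs{S'_{ac}}+O(\sqrt n)$. The key structural input, analogous to the "every vertex lies in at most $2$ of the six sets" step in Claim \ref{trianglecomponent}, is: \emph{no vertex $v$ lies in all three of $S'_a$, $S'_c$, $S'_{ac}$} — or more precisely, we need $2\abs{S'_a\cup S'_c\cup S'_{ac}}\ge \abs{S'_a}+\abs{S'_c}+\abs{S'_{ac}}$, i.e. no vertex is in all three. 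Suppose $v$ were in all three; then there are $u_a\in S_a$, $u_c\in S_c$, $u\in S_{ac}$ with $au_av$, $cu_cv$, $auv$, $cuv$ all good $2$-paths. Taking hyperedges $h$ through $uv$, $h_a$ through $u_av$, $h_c$ through $u_cv$, at least one of $h_a\ne h$ or $h_c\ne h$ holds, say $h_a\ne h$; then the hyperedge through $au_a$, $h_a$, $h$, the hyperedge through $uc$, and the hyperedge $abc$ — wait, here I must be careful that $abc\in H$ (which holds by Observation \ref{2pathfatedge}(b)) and that all five hyperedges are distinct — form a Berge $5$-cycle $a, \cdot, u_a, \cdot, v, \cdot, u, \cdot, c, \cdot, a$, contradiction. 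I would carry out the distinctness bookkeeping exactly as in the triangle case, handling the few coincidence subcases ($u_a=u$, etc.) by noting they contradict goodness of the relevant $2$-paths.

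Combining, the number of good $3$-paths with first edge $ab$ or $bc$ is at most $2(\abs{S'_a}+\abs{S'_c}+\abs{S'_{ac}})+O(\sqrt n)\le 4\abs{S'_a\cup S'_c\cup S'_{ac}}+O(\sqrt n)<4n+C_2\sqrt n$ for a suitable constant $C_2$ and $n$ large, since $\abs{S'_a\cup S'_c\cup S'_{ac}}\le n$. I expect the main obstacle to be the Berge-$C_5$ verification in the "at most two of the three sets" step: one has to check that the five hyperedges produced are genuinely distinct in every coincidence subcase, and in particular that the hyperedge $abc$ is different from the others and that the path $auv$ being good (so the hyperedge through $au$ differs from the one through $uv$) is used correctly. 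The rest is a routine repackaging of the triangle-case argument with one fewer special vertex, so the constants and the $\sqrt n$-error terms fall out mechanically from Lemma \ref{weirdcor} and Claim \ref{maxdegree}.
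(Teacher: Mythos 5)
There is a genuine gap in the counting, and it is not a matter of bookkeeping details. The claim counts \emph{ordered} good $3$-paths whose first edge is $ab$ or $bc$; these come in four types: $b,a,u,v$ and $b,c,u,v$ (starting at $b$), but also $a,b,u,v$ and $c,b,u,v$ (second vertex $b$, third vertex $u\in N(b)$). Your sets $\P_a,\P_c,\P_{ac}$ consist only of good $2$-paths starting at $a$ or at $c$, so they cover only the first two types. The other two types correspond to good $2$-paths $b,u,v$ starting at $b$ with $u\in N(b)\setminus N(a)$ (resp.\ $u\in N(b)\setminus N(c)$); these are in no way ``degenerate'' and can number $\Theta(n)$ --- by the same Lemma \ref{weirdcor} accounting they can be as many as roughly $2n$ each --- so they cannot be absorbed into an $O(\sqrt n)$ error term via Claim \ref{maxdegree} as you propose. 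Worse, if you simply add them back by applying Lemma \ref{weirdcor} at the vertex $b$, your total becomes roughly $8n$, overshooting the claimed $4n+C_2\sqrt n$; so the reorganization around the two endpoints $a,c$ loses exactly the cancellation that makes the claim true.

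The paper's proof is organized differently: it handles the two edges $ab$ and $bc$ separately, and for the edge $ab$ it takes $S_a=N(a)\setminus(N(b)\cup\{b\})$, $S_b=N(b)\setminus(N(a)\cup\{a\})$, so that the good $3$-paths $b,a,u,v$ and $a,b,u,v$ are exactly $\P_a\cup\P_b$ (the common neighbourhood $N(a)\cap N(b)$ is excluded by goodness). The key step, which your ``no vertex lies in all three of $S'_a,S'_c,S'_{ac}$'' argument does not replace, is the estimate $\abs{S'_a\cap S'_b}\leq 160\sqrt n$: if many vertices $v_i$ lay in both, one shows the hyperedges $a_iv_ib_i$ must be in $H$, finds $a_j\neq a_l$ using the maximum-degree bound of Claim \ref{maxdegree}, and then builds a Berge $5$-cycle through the fat pair $ac$ guaranteed by Observation \ref{2pathfatedge}(b) (this is where the hypothesis that $abc$ is a $2$-path of $\D$, hence $ac$ fat, is really used). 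This gives $\abs{S'_a}+\abs{S'_b}\leq n+160\sqrt n$, hence at most $2n+O(\sqrt n)$ good $3$-paths per edge and $4n+C_2\sqrt n$ in total. To repair your write-up you would essentially have to switch to this per-edge scheme; as it stands, the omission of the paths with second vertex $b$ is a fatal gap.
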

\begin{proof}
First we bound the number of $3$-paths whose first edge is $ab$. Let $S_{ab}=N(a)\cap N(b)$.
Let $S_a=N(a) \setminus (N(b) \cup \{b\})$ and $S_b=N(b)\setminus (N(a) \cup \{a\})$.  
For each $x\in \{a,b\}$, let $\P_x$ be the set of good $2$-paths $xuv$ where $u\in S_x$, and let $S_x'=\{v\mid xuv\in \P_x\}$.
The set of good $3$-paths whose first edge is $ab$ is $\P_a\cup \P_b$, because the third vertex of a good $3$-path starting with an edge $ab$ can not belong to $N(a)\cap N(b)$ by the definition of a good $3$-path.

We claim that $\abs{S'_a\cap S'_b}\leq 160\sqrt{n}$. Let us assume by contradiction that $v_0,v_1,\ldots v_k\in S'_a\cap S'_b$ for $k > 160\sqrt{n}$. For each vertex $v_i$ where $0\leq i \leq k$, there are vertices $a_i\in S_a$ and $b_i\in S_b$ such that $aa_iv_i,bb_iv_i$ are good $2$-paths. For each $0\leq i \leq k$, the hyperedge $a_iv_ib_i$ is in $H$, otherwise we can find distinct hyperedges containing the pairs $aa_i,a_iv_i,v_ib_i,b_ib$ and these hyperedges together with $abc$, would form a Berge $5$-cycle in $H$, a contradiction.
We claim that there are $j,l\in \{0,1,\ldots, k\}$ such that $a_j\not = a_l$, otherwise there is a vertex $x$ such that $x=a_i$ for each $0\le i \le k$. Then $xv_i\in G$ for each $0\le i \le k$, so we get that $d_G(x)>k > 160\sqrt{n}$ which contradicts Claim \ref{maxdegree}.

So there are $j,l\in \{0,1,\ldots, k\}$ such that $a_j\not = a_l$ and $a_jv_jb_j,a_lv_lb_l\in H$. By observation \ref{2pathfatedge} (b), there is a hyperedge $h\not = abc$ such that $ac\subset h$. Clearly either $a_j\notin h$ or $a_l\notin h$. Without loss of generality let $a_j\notin h$, so there is a hyperedge $h_a$ with $aa_j \subset h_a \not = h$. Let $h_b\supset b_jb$, then the hyperedges $abc,h,h_a,a_jv_jb_j,h_b$ form a Berge $5$-cycle, a contradiction, proving that $\abs{S'_a\cap S'_b}\leq 160\sqrt{n}$.

Notice that $\abs{S'_a}+\abs{S'_b} = \abs{S'_a\cup S'_b} + \abs{S'_a\cap S'_b}\leq n + 160 \sqrt{n}$. So by Lemma \ref{weirdcor}, we have $$\abs{\P_a}+\abs{\P_b}\leq 2(\abs{S'_a}+\abs{S'_b})+2\cdot7680\sqrt{n}\leq 2(n + 160 \sqrt{n}) + 2\cdot7680\sqrt{n}=2n+15680 \sqrt n$$
for large enough $n$.
So the number of good $3$-paths whose first edge is $ab$ is at most $2n+15680\sqrt n$. By the same argument, the number of good $3$-paths whose first edge is $bc$ is at most $2n+15680\sqrt n$. Their sum is at most $4n+C_2\sqrt {n}$ for $C_2=31360$ and large enough $n$, as desired.
\end{proof}

\begin{claim}
\label{k4pathcomponent} Let $\{a,b,c,d\}$ be the vertex set of a $K_4$ that belongs to $\D$.
Let $F = K_4^3$ be a hypergraph on the vertex set $\{a,b,c,d\}$.  (By Observation \ref{2pathfatedge} (c) $F \subseteq H$.)
Then the number of good $3$-paths whose first edge belongs to $\partial F$ is at most $6n+C_3\sqrt n$ for some constant $C_3>0$ and large enough $n$. 
\end{claim}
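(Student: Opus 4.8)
The plan is to mimic the argument of Claim \ref{trianglecomponent}, treating the $K_4$ on $\{a,b,c,d\}$ in place of the triangle. For a good $3$-path $x u v$ with first edge $xy$ in $\partial F$, the third vertex $v$ lies outside $\{a,b,c,d\}$, and $u \in N(x)$. I would partition the relevant second vertices $u$ according to which subset $T \subseteq \{a,b,c,d\}$ satisfies $u \in \bigcap_{t \in T} N(t)$ and $u \notin N(s)$ for $s \notin T$ (with $u \notin \{a,b,c,d\}$). For each such $T$ and each $x \in T$, let $\P^T_x$ be the set of good $2$-paths $x u v$ with $u$ in the corresponding class, and $S'^{T}_x = \{v \mid xuv \in \P^T_x\}$. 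Each good $2$-path $x u v$ with $u \in \bigcap_{t\in T} N(t)$ extends, for every $z \in \{a,b,c,d\} \setminus T$ (and also for those $z \in T \setminus \{x\}$, but those would make $zxuv$ fail to be a good $3$-path since $u \in N(z)$; so in fact $z$ ranges exactly over the $4 - |T|$ vertices not in $T$), to exactly the good $3$-paths $z x u v$ whose first edge lies in $\partial F$. Summing $\abs{\P^T_x} < 2\abs{S'^{T}_x} + 7680\sqrt n$ from Lemma \ref{weirdcor} over $x \in T$ and multiplying by the multiplicity $4 - |T|$ of extensions, the total count of good $3$-paths with first edge in $\partial F$ is bounded by a linear combination $\sum_{T, x} c_T \abs{S'^{T}_x} + O(\sqrt n)$ for explicit constants $c_T$.

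The crux, exactly as in Claim \ref{trianglecomponent}, is a bound on how many of the sets $S'^{T}_x$ a single vertex $v$ can lie in. The key sub-claim I would prove is: there do not exist two distinct vertices $x, y \in \{a,b,c,d\}$ together with $u_x \in N(x) \setminus \{a,b,c,d\}$, $u_y \in N(y) \setminus \{a,b,c,d\}$ such that both $x u_x v$ and $y u_y v$ are good $2$-paths and $u_x \neq u_y$. Indeed, picking hyperedges $h_x \ni x u_x$, $h'_x \ni u_x v$ (distinct since $x u_x v$ is good), $h_y \ni y u_y$, $h'_y \ni u_y v$, and a hyperedge $g \ni xy$ coming from $\partial F \subseteq \partial H$ — here I use that $F = K_4^3 \subseteq H$ so $xy$ is a fat pair lying in a hyperedge of $F$ — one builds a Berge $5$-cycle on $x, u_x, v, u_y, y$ (closing through $g$), after checking the hyperedges are distinct, which follows from $u_x, u_y \notin \{a,b,c,d\}$ and $u_x \neq u_y$. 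Care is needed when $h'_x = h'_y$ or when one of $h_x, h_y$ accidentally equals $g$; these coincidences either give a shorter argument or are ruled out because $u_x, u_y \notin \{a,b,c,d\}$. This forces: for any $v \notin \{a,b,c,d\}$, all pairs $(x, u_x)$ realizing $v \in S'^{T}_x$ share a single second vertex $u$, except possibly for a bounded ($O(\sqrt n)$) exceptional set handled via Claim \ref{maxdegree} as in Claim \ref{2pathcomponent}; hence $v$ contributes to the various $S'^{T}_x$ in a tightly controlled way, and the union $\bigcup_{T,x} S'^{T}_x$ together with this multiplicity bound yields $\sum_{T,x} c_T \abs{S'^{T}_x} \le 6n + O(\sqrt n)$.

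To see the constant is $6$ and not larger, observe that the extension multiplicity $4-|T|$ is largest ($=3$) when $|T| = 1$, but a class with $|T|=1$ contributes only one value of $x$, giving weight $2 \cdot 3 = 6$ per vertex of $S'$; classes with $|T| = 2$ give weight $\le 2 \cdot 2 \cdot 2 = 8$ split over two choices of $x$ but with extension multiplicity $2$, and the no-two-distinct-$u$ sub-claim collapses the contributions of a fixed $v$ across different $x$'s so that the effective per-vertex weight is again at most $6$; classes with $|T| \ge 3$ have extension multiplicity $\le 1$. Summing the $O(\sqrt n)$ error terms over the constantly many classes $T$ and the constantly many $x \in T$ gives the claimed $C_3 \sqrt n$.

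\textbf{Main obstacle.} The delicate point is the bookkeeping that turns the per-class bounds $\abs{\P^T_x} < 2\abs{S'^T_x} + 7680\sqrt n$ into a clean $6n$: one must show that the worst case is a single vertex $v$ sitting in $S'^{T}_x$ for one $x$ with $|T|=1$ (weight $6$), and that no combination of memberships across different $T$'s and different $x$'s beats this — this is precisely where the Berge-$5$-cycle sub-claim above, ruling out two distinct "private" second vertices $u_x \neq u_y$ for a common endpoint $v$, does the work, by the same mechanism as in the triangle case of Claim \ref{trianglecomponent}. The rest is routine: verifying hyperedge-distinctness in each Berge-cycle construction (using $u_x,u_y\notin\{a,b,c,d\}$ and $F\subseteq H$), and absorbing the $O(\sqrt n)$ exceptional vertices via Claim \ref{maxdegree}.
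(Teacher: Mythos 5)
Your plan follows the same general route as the paper (classify the middle vertices $u$ of the $2$-paths by their attachment to $\{a,b,c,d\}$, apply Lemma \ref{weirdcor} to each class, and control overlaps of the endpoint sets by a Berge-$5$-cycle argument), but there is a genuine gap exactly at the point you yourself flag as the main obstacle. Your sub-claim forbids two good $2$-paths $xu_xv$, $yu_yv$ with $x\neq y$ and $u_x\neq u_y$, but it does not forbid a vertex $v$ from being reached from two different $K_4$-vertices through a \emph{common} middle vertex $u\in N(x)\cap N(y)$ (this is perfectly consistent with goodness: one only needs $uv$ to lie in no triangle), nor from the same start $x$ through middle vertices lying in different classes $T$. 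In the first situation $v\in {S'}^{\{x,y\}}_x\cap {S'}^{\{x,y\}}_y$ and your accounting charges it $2\cdot 2+2\cdot 2=8$; in the second, e.g.\ $v\in {S'}^{\{x\}}_x\cap {S'}^{\{x,y\}}_x$, it is charged $3\cdot 2+2\cdot 2=10$. So the asserted per-vertex weight of $6$ does not follow from the sub-claim, and the bound $6n+C_3\sqrt n$ is not established as written. There are two repairs. The paper's: lump all $u$ adjacent to at least two of $a,b,c,d$ into one family $\P_S$ and bound $\abs{\P_S}\le 2\abs{S'}$ \emph{directly} (each endpoint ends at most two paths of the whole collection, and none from the $\P_x$'s), not via Lemma \ref{weirdcor}, then use the uniform extension factor $3$. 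Alternatively, and closer to your set-up: classes with $\abs{T}\ge 2$ contribute nothing at all, because if $u\in N(x)\cap N(y)$ with $x,y\in\{a,b,c,d\}$ then $xu$ lies in the triangle $xuy$ and $zx$ lies in a triangle of the $K_4$, so $zxu$ is a bad $2$-path for every $z$; hence no good $3$-path with first edge in $\partial F$ has such a $u$ as its third vertex (your ``exactly $4-\abs{T}$'' multiplicity is in fact $0$ for $\abs{T}\ge 2$, and only an upper bound even for $\abs{T}=1$). With either repair, only the $\abs{T}=1$ classes remain, their endpoint sets are pairwise disjoint by your sub-claim, and $3\bigl(2(\abs{S'_a}+\abs{S'_b}+\abs{S'_c}+\abs{S'_d})+4\cdot 7680\sqrt n\bigr)\le 6n+O(\sqrt n)$ — essentially the paper's computation.

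A second, smaller gap is in the proof of the sub-claim itself: the coincidence $h'_x=h'_y$ is neither ``ruled out'' by $u_x,u_y\notin\{a,b,c,d\}$ nor does a ``shorter argument'' help, since a shorter Berge cycle is not forbidden and the hyperedge $\{u_x,v,u_y\}$ may genuinely be the only hyperedge containing both pairs $u_xv$ and $vu_y$. The correct fix is to reroute through the $K_4$: take the $5$-cycle on $x,u_x,u_y,y,w$ for a fifth vertex $w\in\{a,b,c,d\}\setminus\{x,y\}$, using $h_x$, the common hyperedge $\{u_x,v,u_y\}$, $h_y$, and two distinct hyperedges of $F$ containing $yw$ and $wx$ (distinctness from the rest holds because those two lie inside $\{a,b,c,d\}$ while the others contain a vertex outside). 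This padding-through-$F$ step is exactly what the paper's preliminary observation (no Berge path of length $2$, $3$ or $4$ between $K_4$-vertices in $H\setminus F$ with intermediate vertices outside $\{a,b,c,d\}$, hence no $3$- or $4$-path between them in $G\setminus\partial F$) packages once and for all; without it, neither your sub-claim nor the disjointness of the $S'_x$'s is fully proved.
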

\begin{proof}
First, let us observe that there is no Berge path of length $2,3$ or $4$ between distinct vertices $x,y\in \{a,b,c,d\}$ in the hypergraph $H\setminus F$, because otherwise this Berge path together with some edges of $F$ will form a Berge $5$-cycle in $H$. This implies, that there is no path of length $3$ or $4$ between $x$ and $y$ in $G\setminus \partial F$, because otherwise we would find a Berge path of length $2,3$ or $4$ between $x$ and $y$ in $H\setminus F$.

Let $S=\{u\in V(H)\setminus \{a,b,c,d\}\mid \exists \{x,y\}\subset \{a,b,c,d\}, u\in N(x)\cap N(y)\}$.
For each $x\in \{a,b,c,d\}$, let $S_x=N(x)\setminus (S\cup \{a,b,c,d\})$. Let $\P_S$ be the set of good $2$-paths $xuv$ where $x\in \{a,b,c,d\}$ and $u\in S$. Let $S'=\{v\mid xuv\in \P_S\}$.
For each $x\in \{a,b,c,d\}$, let $\P_x$ be the set of good $2$-paths $xuv$ where $u\in S_x$, and let $S_x'=\{v\mid xuv\in \P_x\}$.

Let $v\in S'$. By definition, there exists a pair of vertices $\{x,y\}\subset \{a,b,c,d\}$ and a vertex $u$, such that $xuv$ and $yuv$ are good $2$-paths. 

Suppose that $zu'v$ is a $2$-path different from $xuv$ and $yuv$ where $z \in \{a,b,c,d\}$. If $u'=u$ then $z\notin \{x,y\}$ so there is a Berge $2$-path between $x$ and $y$ or between $x$ and $z$ in $H \setminus F$, which is impossible. So $u\not=u'$. Either $z\not=x$ or $z\not=y$, without loss of generality let us assume that $z\not=x$. Then $zu'vux$ is a path of length $4$ in $G\setminus \partial F$, a contradiction.
 So for any $v\in S'$ there are only $2$ paths of $\P_a\cup \P_b \cup \P_c \cup \P_d \cup \P_S$ that contain $v$ as an end vertex -- both of which are in $\P_S$ -- which means that $v\notin S_a' \cup S_b'\cup S_c' \cup S_d'$, so $S'\cap (S_a'\cup S_b'\cup S_c' \cup S_d') = \emptyset$. Moreover,
 \begin{equation}
\label{boundingP_S}
\abs{\P_S}\leq 2\abs{S'}.
 \end{equation}

We claim that $S_a'$ and $S_b'$ are disjoint. Indeed, otherwise, if $v\in S_a'\cap S_b'$ there exists $x\in S_a$ and $y\in S_b$ such that $vxa$ and $vyb$ are paths in $G$, so there is a $4$-path $axvyb$ between vertices of $F$ in $G\setminus \partial F$, a contradiction. Similarly we can prove that $S_a', S_b', S_c'$ and $S_d'$ are pairwise disjoint. This shows that the sets $S', S_a', S_b', S_c'$ and $S_d'$ are pairwise disjoint. So we have 
\begin{equation}
\label{unionofS_x}
\abs{S'\cup S_a'\cup S_b'\cup S_c' \cup S_d'}=\abs{S'}+\abs{S_a'}+\abs{S_b'}+\abs{S_c'}+\abs{S_d'}.
\end{equation}

By Lemma \ref{weirdcor}, we have $\abs{\P_a}+\abs{\P_b}+\abs{\P_c}+\abs{\P_d}\leq2(\abs{S_a'}+\abs{S_b'}+\abs{S_c'}+\abs{S_d'})+4\cdot 7680\sqrt{n}.$
Combining this inequality with \eqref{boundingP_S}, we get
\begin{equation}
\label{longinequality}
    \abs{\P_S}+\abs{\P_a}+\abs{\P_b}+\abs{\P_c}+\abs{\P_d}\leq2 \abs{S'} + 2(\abs{S_a'}+\abs{S_b'}+\abs{S_c'}+\abs{S_d'})+4\cdot 7680\sqrt{n}.
\end{equation}

Combining \eqref{unionofS_x} with \eqref{longinequality} we get
\begin{equation}
\label{combine6and7}
\abs{\P_S}+\abs{\P_a}+\abs{\P_b}+\abs{\P_c}+\abs{\P_d}\leq 2\abs{S' \cup S_a'\cup S_b'\cup S_c' \cup S_d'}+30720\sqrt n<2n+30720\sqrt n,
\end{equation}
for large enough $n$.

Each $2$-path in $\P_S \cup \P_a\cup \P_b \cup \P_c \cup \P_d$ can be extended to at most three good $3$-paths whose first edge is in $\partial F$.
(For example, $auv\in \P_a$ can be extended to $bauv,cauv$ and $dauv$.) On the other hand, every good $3$-path whose first edge is in $\partial F$ must contain a $2$-path of $\P_a\cup \P_b \cup \P_c \cup \P_d \cup \P_S$ as a subpath. So the number of good $3$-paths whose first edge is in $\partial F$ is at most  $3 \abs{\P_a\cup \P_b \cup \P_c \cup \P_d \cup \P_S} = 3 (\abs{\P_S}+\abs{\P_a}+\abs{\P_b}+\abs{\P_c}+\abs{\P_d})$ which is at most $6n+C_3\sqrt n$ by \eqref{combine6and7}, for $C_3 = 92160$ and large enough $n$, proving the desired claim.
\end{proof}


\subsection{Combining bounds on the number of $3$-paths}

Recall that $\alpha_1 \abs{G}$, $\alpha_2 \abs{G}$, $(1-\alpha_1-\alpha_2) \abs{G}$ are the number of edges of $G$ that are contained in triangles, $2$-paths and $K_4$'s of the edge-decomposition $\mathcal D$ of $G$, respectively. Then the number of triangles, $2$-paths and $K_4$'s in $\mathcal D$ is $\alpha_1\abs{G}/3$, $\alpha_2\abs{G}/2$ and $(1-\alpha_1-\alpha_2)\abs{G}/6$ respectively. Therefore, using Claim \ref{trianglecomponent}, Claim \ref{2pathcomponent} and Claim \ref{k4pathcomponent}, the total number of (ordered) good $3$-paths in $G$ is at most $$\frac{\alpha_1}{3}\abs{G} ({8n+C_1\sqrt{n}}) + \frac{\alpha_2}{2}\abs{G}(4n+C_2\sqrt{n}) + \frac{(1-\alpha_1-\alpha_2)}{6}\abs{G} (6n+C_3\sqrt{n}) \le $$$$\le \abs{G}n\left(\frac{8\alpha_1}{3} + 2\alpha_2+{(1-\alpha_1-\alpha_2)}\right)+(C_1+C_2+C_3)\sqrt{n}\abs{G}=$$
$$=\frac{n^{2}\overline d_G}{2}\left( \frac{5\alpha_1+3\alpha_2+3}{3}\right)+(C_1+C_2+C_3)\frac{n^{3/2}\overline d_G}{2}.$$

Combining this with the fact that the number of good $3$-paths is at least $n\overline d_G^3 - C_0n^{3/2}\overline d_G$ (see Claim \ref{countingpaths}), we get 
$$n\overline d_G^3  - C_0n^{3/2}\overline d_G \le \frac{n^2\overline d_G}2 \left( \frac{5\alpha_1+3\alpha_2+3}{3} \right)+(C_1+C_2+C_3)\frac{n^{3/2}\overline d_G}{2}.$$
Rearranging and dividing by $n\overline d_G$ on both sides, we get
$$\overline d_G^2\le \left( \frac{5\alpha_1+3\alpha_2+3}{6} \right)n+\frac{(C_1+C_2+C_3)}{2}\sqrt{n}+C_0\sqrt n.$$
Using the fact that $(5\alpha_1+3\alpha_2+3)/6\geq 1/2$, it follows that
$$\overline d_G^2\le \left( \frac{5\alpha_1+3\alpha_2+3}{6} \right)n\left(1+\frac{(C_1+C_2+C_3)+2C_0}{\sqrt{n}}\right ).$$

So letting $C_4=(C_1+C_2+C_3)+2C_0$ we have,
\begin{equation}
\label{degreebound}
  \overline d_G\le \sqrt{1+\frac{C_4}{\sqrt n}}\sqrt{\frac{5\alpha_1+3\alpha_2+3}{6}}\sqrt{n} < \left(1+\frac{C_4}{2\sqrt n}\right) \sqrt{\frac{5\alpha_1+3\alpha_2+3}{6}}\sqrt{n}, 
\end{equation}
for large enough $n$. By Claim \ref{alpha12}, we have  $$\abs{H}\leq \frac{\alpha_1 }{3}\abs{G}+\frac{\alpha_2}2\abs{G}+\frac{2(1-\alpha_1-\alpha_2)}{3}\abs{G} = \frac{4-2\alpha_1-\alpha_2 }{6}\frac{n \overline d_G}{2}.$$ 
Combining this with \eqref{degreebound} we get
$$\abs{H}\leq \left(1+\frac{C_4}{2\sqrt n}\right)\frac{(4-2\alpha_1-\alpha_2) }{12}\sqrt{\frac{5\alpha_1+3\alpha_2+3}{6}}n^{3/2},$$
for sufficiently large $n$. So we have
$$\ex_3(n,C_5)\leq (1+o(1))\frac{(4-2\alpha_1-\alpha_2) }{12}\sqrt{\frac{5\alpha_1+3\alpha_2+3}{6}}n^{3/2}.$$
The right hand side is maximized when $\alpha_1=0$ and $\alpha_2=2/3$, so we have
$$\ex_3(n,C_5)\leq (1+o(1))\frac{4-2/3 }{12}\sqrt{\frac{5}{6}}n^{1.5}<(1+o(1))0.2536 n^{3/2}.$$
This finishes the proof.

\section*{Acknowledgements}
The research of the authors is partially supported by the National Research, Development and Innovation Office -- NKFIH, grant K116769.

\end{document}